\newtheorem{theorem}{Theorem}
\newtheorem{lemma}[theorem]{Lemma}
\newtheorem{proposition}[theorem]{Proposition}
\newtheorem{corollary}[theorem]{Corollary}
\begin{document}

\pagenumbering{roman} \thispagestyle{empty}

\pagebreak

\begin{center}
\large{HYPERBOLIC GRAPHS OF SURFACE GROUPS}
\end{center}
\begin{abstract}

We give a sufficient condition under which the fundamental group
of  a reglued graph of surfaces is hyperbolic. A reglued graph of surfaces is constructed by cutting a fixed graph of surfaces along the edge surfaces, then regluing by pseudo-Anosov homeomorphisms of the edge surfaces. By carefully choosing the regluing homeomorphism, we
construct an example of such a reglued graph of surfaces, whose
fundamental group is not abstractly commensurate to any
surface-by-free group, i.e., which is different from all the
examples given in the paper \cite {Mosher:hypbyhyp}.

\end{abstract}

\pagebreak

\pagebreak

\tableofcontents


\pagebreak

\pagenumbering{arabic} \pagestyle{myheadings} \markboth{}{}
\section{Introduction} The fundamental group of the mapping torus of a
pseudo-Anosov homeomorphism of an oriented closed hyperbolic
surface is hyperbolic. This was first proved by Thurston. A direct
proof was given by Bestvina and Feighn \cite{BestvinaFeighn:combination}. Using their idea, Mosher   \cite{Mosher:hypbyhyp} proved the following theorem.

 Consider an oriented closed hyperbolic surface $S$. Let $\Phi_{1},\cdots,\Phi_{ m}
\in MCG(S)$ be an independent set of pseudo-Anosov mapping classes
of $S$, and let $\phi_{1},\cdots,\phi_{ m} \in Homeo(S)$ be
pseudo-Anosov representatives of $\Phi_{1},\cdots,\Phi_{m}$
respectively. If $i_1$, $\cdots$, $i_m$ are large enough positive
integers, then the fundamental group of the graph of spaces
$\mathcal G$, as shown in Figure 1,
\begin{figure}[h]
\begin{center}
\includegraphics{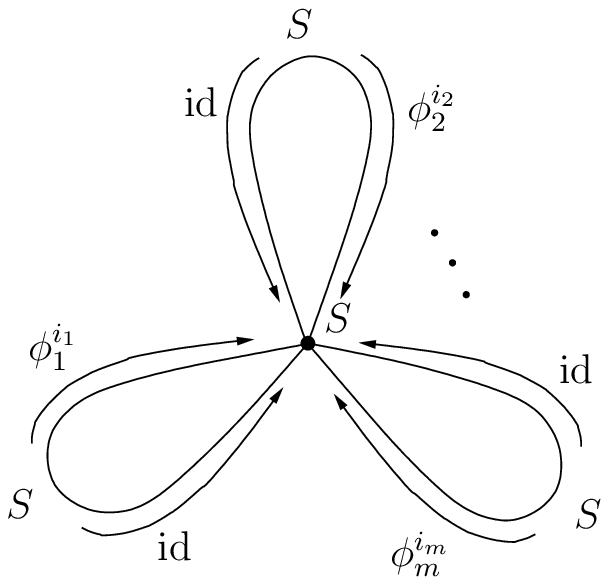}
\end{center}
\caption{}
\end{figure}
is a hyperbolic group. In the statement of this theorem, by saying
a set $B$ of pseudo-Anosov mapping classes is \emph{independent},
we mean the sets $Fix(\Phi)$ are pairwise disjoint for $\Phi\in
B$, where $Fix(\Phi)$ consists of the attractor and the repeller
of $\Phi$ on the space of projective measured foliations
$\mathcal{PMF}(S)$.



 A \emph{graph of surfaces} $S\Gamma$ consists of an  oriented
connected finite underlying graph $\Gamma$, a function which
assigns to each vertex a closed hyperbolic surface or orbifold, to
each edge a closed hyperbolic surface, and another function which
assigns to each oriented edge a covering map from the edge surface
to the vertex surface of the  origin of the edge. In the cases
studied in this paper, we change the canonical graph of surfaces
by cutting along the edge surfaces, then choosing pseudo-Anosov
homeomorphisms of the edge surfaces, then regluing. We call it a
\emph{graph of surfaces with pseudo-Anosov regluing}.
 Thus the mapping torus of a
pseudo-Anosov homeomorphism can be considered as this type of
space whose underlying graph consists of only one vertex and one
edge, and the vertex and edge spaces are the same hyperbolic
surface. The case studied by Mosher is another reglued graph of
surfaces with the underlying graph consists of only one vertex, in
addition the vertex and edge spaces
 are the same hyperbolic surface $S$.

We shall extend Mosher's theorem to the general graphs of surfaces
with pseudo-Anosov regluing. Theorem 1 says that if the
pseudo-Anosov homeomorphisms are chosen to satisfy an appropriate
independence condition, then the fundamental group of the reglued
graph of surfaces is word hyperbolic, when these homeomorphisms
are replaced with sufficiently high powers of themselves.

We shall describe this cutting and regluing process with more
details. Let $S\Gamma$ be a graph of surfaces with the underlying
graph $\Gamma$, let $E$ be the set of oriented edges of $\Gamma$,
and let $V$ be the set of vertices of $\Gamma$. For each $e\in E$,
let $S_e$ be the corresponding edge surface. For each oriented
edge $e$, there is a finite covering map $p_e: S_e\rightarrow
F_{o(e)}$, where $F_{o(e)}$ is the vertex surface of the origin
$o(e)$ of the
 edge $e$.  For each inverse pair
of oriented edges $e,\ \overline{e}$, there is an inverse pair
of homeomorphisms $g_e: S_e\rightarrow S_{\overline{e}},\
g_{e}^{-1}: S_{\overline{e}}\rightarrow S_{e}$. Let
$\boldsymbol{\varphi}=\{\phi_e|\ e\in E\}$, where $\phi_e:
S_e\rightarrow S_e$ is a pseudo-Anosov homeomorphism of $S_e$. Let $S\Gamma_{\boldsymbol{\varphi}}$
be the graph of surfaces with pseudo-Anosov regluing obtained from
$S\Gamma$ by cutting along each $S_e$ and regluing using $\phi_e$,
i.e., in the reglued graph of surfaces, the effect is to replace
the map $g_e: S_e\rightarrow S_{\overline e}$ by the map
$g_e\circ\phi_e$, for $e\in E$. Moreover, let
$\boldsymbol{m}=\{m_e|\ e\in E\}$, where $m_e$ are positive
integers, and let $S\Gamma_{\boldsymbol{\varphi^m}}$ be the graph
of surfaces obtained from $S\Gamma$ by regluing using
$\phi_e^{m_e}$ for each $e\in E$.

Given a vertex $v$ of the underlying graph $\Gamma$, let $F_v$ be
the corresponding vertex surface (or orbifold). For each $v\in V$,
denote $I_v=\{i|\ e_i \text{ is an oriented edge such that the}$  
$\text{origin}$ $ \text{of } e_i \text{ is } v\}$.  For each $v\in V$ and
each $i\in I_v$, there is a finite index covering map
$p_i:S_{i}\rightarrow F_v$, where $S_i$ is a shorthand notation of
$S_{e_i}$. For an oriented edge $e_i$ has the vertex $v$ as both
of its origin and terminal, the covering maps
$S_i\xrightarrow{p_i} F_v$ and $S_i\xrightarrow{g_i}S_{\overline
i}\xrightarrow{p_{\overline{i}}} F_v$ might be different, where $g_i$ is a shorthand notation of
$g_{e_i}$.
The portion of $S\Gamma_{\boldsymbol{\varphi^m}}$ around a vertex
space $F_v$ could look like in Figure 2
\begin{figure}[h]
\begin{center}
\input{general.pstex_t}
\end{center}
\caption{}
\end{figure}
\bigskip

For the purpose of Theorem 1, fix a hyperbolic structure on each
vertex surface $F_v$. For each $v\in V$ and each $i\in I_v$,
suppose $S_i$ equipped with the pullback metric by the covering
map $p_i:S_i\rightarrow F_v$. Hence for each covering map $p_i$,
there is the derivative map $Dp_i:PS_i\rightarrow PF_v$, where
$PS_i$,  $PF_v$ are the projective tangent bundles of $S_i$ and
$F_v$ respectively. For an oriented edge $e_j$, let
$\phi_{j}^{m_j}:S_{j}\rightarrow S_{j}$ be the pseudo-Anosov
homeomorphism for the edge $e_j$, with the stable geodesic
lamination $\Lambda^s_j\subset S_{j}$; the stable geodesic
lamination $\Lambda^s_{\overline j}\subset S_{\overline j}$ of
$(\phi^{m_j}_{\overline j})=g_j\phi_j^{-m_j}g_j^{-1}$ is homotopic
to the image under $g_j$ of the unstable geodesic lamination of
$\phi^{m_j}_j$. The geodesic laminations $\Lambda^s_j$ and
$\Lambda^s_{\overline j}$ are independent of the choice of the
exponent $m_j$. In the following, let $T\Lambda_i^s$ denote the
unit tangent vector space of $\Lambda^s_i$.

The main theorem of this paper is
\begin{theorem}
 Let $S\Gamma_{\boldsymbol{\varphi^m}}$ be a graph of surfaces with pseudo-Anosov regluing. Let $\Gamma$ be  its underlying
  graph . If for each vertex $v\in \Gamma$, and for each $i\in
  I_v$,
 the derivative maps
  $Dp_i|T\Lambda^s_i$ are injections, and their images are disjoint compact subsets of
 $PF_v$, then the fundamental group of $S\Gamma_{\boldsymbol{\varphi^m}}$ is
hyperbolic, when $m_i\in \boldsymbol{m}$ are sufficiently large.
\label{main}
\end{theorem}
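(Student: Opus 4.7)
The plan is to verify the hypotheses of the Bestvina--Feighn combination theorem for the graph of groups decomposition of $\pi_1(S\Gamma_{\boldsymbol{\varphi^m}})$ read off from the graph of spaces $S\Gamma_{\boldsymbol{\varphi^m}}$. The vertex and edge groups are the (orbifold) fundamental groups of closed hyperbolic surfaces, hence word hyperbolic; the edge-to-vertex inclusions are induced by finite coverings (namely $p_e$, and its regluing-twisted counterpart $p_{\bar e}\circ g_e\circ \phi_e^{m_e}$ on the other side), so they have finite-index image and are automatically quasi-isometric embeddings. What remains, and what constitutes essentially the entire difficulty, is the hallways flare (equivalently, annuli flare) condition.

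Following Mosher's strategy, I would model a hallway as a bi-infinite sequence of geodesic segments in the universal covers $\widetilde F_v$ of the vertex surfaces, glued together across lifts of edge surfaces by derivatives of the regluing maps $g_i\circ\phi_i^{m_i}$. The flare received at a crossing of an edge $e_j$ at a vertex $v$ is governed by how the tangent direction of the hallway sits, after projection to $PF_v$, relative to the compact set $Dp_j(T\Lambda^s_j)\subset PF_v$: the map $D\phi_j^{m_j}$ has pseudo-Anosov north--south dynamics on $PS_j$ with attractor $T\Lambda^u_j$ and repeller $T\Lambda^s_j$, so on any compact set disjoint from $T\Lambda^s_j$ it expands transverse lengths by a factor tending to infinity with $m_j$.

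The independence hypothesis enters precisely here. Since for distinct $i,j\in I_v$ the compact sets $Dp_i(T\Lambda^s_i)$ and $Dp_j(T\Lambda^s_j)$ are disjoint in $PF_v$, a tangent direction close to $Dp_i(T\Lambda^s_i)$ stays a definite distance from every $Dp_j(T\Lambda^s_j)$ with $j\neq i$. Hence whenever a hallway enters through $e_i$ and next exits through a different edge $e_j$, it receives a definite exponential stretching at the crossing of $e_j$. The case of exit through the same edge $e_i$ is handled by the pseudo-Anosov dynamics itself: a segment whose tangent is not uniformly close to $T\Lambda^s_i$ is thrown under $D\phi_i^{m_i}$ toward $T\Lambda^u_i$ by a definite amount, so either the corresponding vertex segment is short or the next crossing again flares. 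Combining these dichotomies along the hallway yields a uniform flare inequality.

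The principal obstacle I anticipate is converting these pointwise projective-dynamics statements into uniform quasi-geodesic estimates on hallways in the universal cover of $S\Gamma_{\boldsymbol{\varphi^m}}$ with its graph-of-spaces metric. One must keep careful track of the several metrics in play on each $S_i$---the pullback hyperbolic metrics from $F_v$ and $F_{t(e_i)}$ via $p_i$ and $p_{\bar i}\circ g_i$, and the singular flat metric preserved by $\phi_i^{m_i}$---and bound the distortion between them by compactness. Once that bookkeeping is done and uniform constants are extracted from the compact hypothesis sets, choosing each $m_i$ sufficiently large forces the flare inequality to hold and Bestvina--Feighn delivers hyperbolicity of $\pi_1(S\Gamma_{\boldsymbol{\varphi^m}})$.
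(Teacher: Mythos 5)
Your overall skeleton coincides with the paper's: reduce to the Bestvina--Feighn combination theorem, observe that the quasi-isometric embedding condition is automatic for finite covers, and obtain flaring for (length-2) hallways by locating the tangent directions of the vertex geodesics in $PF_v$ relative to the compact sets $Dp_i(T\Lambda^s_i)$, with the disjointness hypothesis forcing all but one preimage to flare. The gap is in the step you lean on to produce the flare itself: the assertion that $D\phi_j^{m_j}$ has north--south dynamics on $PS_j$ with repeller $T\Lambda^s_j$ and ``expands transverse lengths'' on compact sets disjoint from it. As stated this is not even well posed---$\phi_j$ is not smooth at the singularities and is not an isometry of the hyperbolic metric, so its derivative action on the projective tangent bundle carries no direct information about hyperbolic lengths of geodesic representatives of $\phi_j^{m_j}(\gamma)$. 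What is actually needed, and what occupies most of the paper, is a quantitative dichotomy for homotopy classes $\gamma$ in an edge surface: either $\gamma\in slope^{\eta}_{\phi}$, in which case $|\phi^{n}(\gamma)|\geq\lambda|\gamma|$ for all large $n$ (via the singular Euclidean structure and the inclusion $slope^{\eta}_{\phi}\subset stretch^{\lambda}_{\phi^{n}}$), or else, once $|\gamma|_E$ is long, at least $(1-\epsilon)$ of the hyperbolic geodesic $\gamma^h$ has tangent lines within $\epsilon$ of $\Lambda^s_{\phi}$ in $PS$ (the new parallel corresponds lemma, proved with maximal $\eta$-levers, quasi-geodesic straightening, and Fact 1). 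You explicitly defer this conversion to ``metric bookkeeping by compactness,'' but it is the key lemma of the argument, not bookkeeping; without it the proposal does not close.

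A second, independent problem is your treatment of consecutive crossings of the same edge. For an essential hallway the dangerous configuration is two \emph{distinct lifts} of the same vertex geodesic under the same covering $p_i$ both shadowing $\Lambda^s_i$; disjointness of the images $Dp_i(T\Lambda^s_i)$ and $Dp_j(T\Lambda^s_j)$ for $i\neq j$ says nothing about this case, which is exactly why the theorem also assumes each $Dp_i|T\Lambda^s_i$ is injective. The paper handles both cases at once by a compactness contradiction: if arbitrarily long vertex geodesics had two distinct preimages in $\Sigma=\bigcup_{i\in I_v}p_i^{-1}(\gamma^h_v)$ lying in the corresponding $WN_{\epsilon_n}(\Lambda^s_i)$, one extracts long subsegments of the stable laminations whose $Dp$-images in $PF_v$ come arbitrarily close, contradicting disjointness when the indices differ and injectivity when they coincide. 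Your sketch never invokes the injectivity hypothesis, so even granting your dynamics claim, the same-edge case of the flare condition would remain unproved.
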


The proof of the hyperbolicity of
$S\Gamma_{\boldsymbol{\varphi^m}}$ depends ultimately on the
Combination Theorem of \cite{BestvinaFeighn:combination}.  The
Combination Theorem says that if the quasi-isometrically embedded
condition (which is automatically satisfied in the cases studied
in this paper) and the hallways flare condition (which is much
more difficult to check) both hold, then
$S\Gamma_{\boldsymbol{\varphi^m}}$ is a hyperbolic space.  In
order to check the satisfaction of the hallways flare condition,
we need to extend the parallel corresponds lemma
\cite{Mosher:hypbyhyp}, the key in that paper, to a new version of
the parallel corresponds lemma.

The idea of the proof of Theorem 1 is: by applying the new version
of parallel corresponds lemma, if the hypothesis of Theorem 1 is
satisfied, then the hallways flare condition is satisfied.
Therefore the fundamental group of
$S\Gamma_{\boldsymbol{\varphi^m}}$ is hyperbolic.

Here are some applications of this theorem.

First: let $S$ be a closed hyperbolic surface, let $G$, $H$ be
finite subgroups of the mapping class group $MCG(S)$, and let
$\Phi\in MCG(S)$ be a pseudo-Anosov mapping class. Suppose $G$,
$H$ each have trivial intersection with the virtual centralizer of
$\langle\Phi\rangle$ in $MCG(S)$, then for sufficiently large $n$,
the subgroup $A$ of $MCG(S)$ generated by $G,\Phi^nH\Phi^{-n}$ is
isomorphic to the free product of these subgroups. Even more,
 $A$ is a virtual Schottky subgroup of ${MCG(S)}$, in the sense of \cite{FarbMosher:quasiconvex}.

Second: let $\mathcal G_{\phi^m}$ be a graph of surfaces with regluing as in
Figure 3,
\begin{figure}[h]
\begin{center}
\input{spacegraph.pstex_t}
\end{center}
\caption{}
\end{figure}
where $S$, $F$ are genus 3 and 2 tori, $\phi: S\rightarrow S$ is a
pseudo-Anosov homeomorphism. Suppose there exist simple closed
curves $a\subset F$ and $c\subset S$, as shown in Figure 4, such
that $p^{-1}(a)=c,\ c\subset q^{-1}(a)$, and $q^{-1}(a)$ is
disconnected. In addition, suppose that in the group $MCG(S)$, the
virtual centralizer of $\langle \Phi \rangle$ has trivial
intersection with the deck transformation groups of $p$ and $q$,
where $\Phi$ is the mapping class of $\phi$. Then $\pi_1(\mathcal
G_{\phi^m})$ is hyperbolic when m is sufficiently large.
\begin{figure}[h]
\begin{center}
\includegraphics{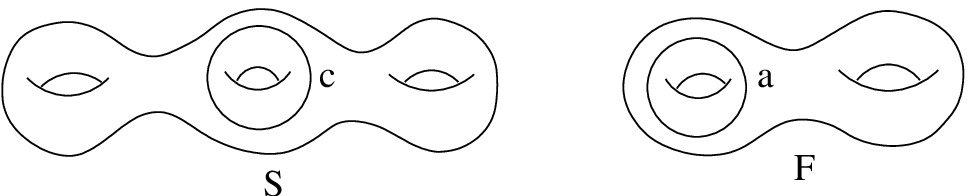}
\end{center}
\caption{}
\end{figure}

More interesting, we will see that there exists a pseudo-Anosov
homeomorphism $\phi$ of $S$, such that $\pi_1(\mathcal
G_{\phi^m})$ is not commensurate to $\pi_1(S^\prime)\rtimes K$,
for any oriented, closed hyperbolic surface $S^\prime$, and for
any free group $K$, where $\mathcal G_{\phi^m}$ as the above.
More than that, $\pi_1(\mathcal G_{\phi^m})$ is not even
quasi-isometric to any surface-by-free group. Therefore
$\pi_1(\mathcal G_{\phi^m})$ is different from all the hyperbolic
groups constructed in \cite{Mosher:hypbyhyp}.

\emph{Problems.} Do there exist some reducible homeomorphisms of
the edge surfaces, such that the graph of surfaces with reducible
homeomorphism regluing are hyperbolic?

Is Theorem 1 still true when the vertex and edge groups are free
groups?
\bigskip


\section{Preliminaries}

In this section, we recall some preliminaries about combinatorial
and geometric group theory, and some facts of hyperbolic geometry
which will be used later.
\bigskip

\noindent\textbf{Graphs of surfaces}  (The
material in this subsection can be found in \cite{ScottWall} and
\cite{Serre:trees})

Let $\Gamma$ be a connected finite graph, let $e$ be an oriented
edge of $\Gamma$, and let $\overline e$ be the inverse edge of
$e$. The vertex $o(e)$ is called the origin of $e$ and the vertex
$t(e)$ is called the terminal of $e$, obviously $o(e)=t(\overline
e)$.

A \emph{graph of surfaces} $S\Gamma$ consists of a connected
finite graph $\Gamma$ and a function which assigns to each vertex
$v\in \Gamma$ a closed hyperbolic surface or orbifold $F_v$, to
each pair of oriented edges $e$, $\overline e$ closed hyperbolic
surfaces $S_e$, $S_{\overline e}$ and an inverse pair of
homeomorphisms $S_e\rightarrow S_{\overline e}$, $S_{\overline
e}\rightarrow S_{e}$, and to each edge $e$ a continuous map
$p_{e}:S_e\rightarrow F_{o(e)}$, such that $p_e$ induces an
injection on the fundamental groups. In most of our cases $p_e$
are covering maps for every edge $e$ of $\Gamma$.

Given a graph of surfaces $S \Gamma$, we can define the
\emph{total space} $S_\Gamma$ as the quotient of the disjoint
union $(\cup\{F_v| v\in V(\Gamma)\})\bigcup(\cup\{S_e\times I|
e\in E(\Gamma)\})$ by identifying the equivalent classes:
$(s,0)\sim p_e(s)$ for $(s,0)\in S_e\times 0$, $p_e(s)\in F_{o(e)}
$; $(s,1)\sim p_{\overline e}(s)$ for $(s,1)\in S_e\times 1$,
$p_{\overline e}(s)\in F_{t(e)}$. The \emph{fundamental group of
the graph of surfaces} $\pi_1(S\Gamma)$ is defined to be the
fundamental group of the total space $S_\Gamma$. There is a
projection map $\pi:S_{\Gamma}\rightarrow \Gamma$ such that each
vertex surface $F_v$ maps to the vertex $v$ and each $S_e\times I$
maps to the edge $e$, $\pi$ is an onto map.

The universal cover $\widetilde {S\Gamma}$ of $S\Gamma$ is a union
of copies of the universal covers $\widetilde {S_e}\times I$ and
$\widetilde {F_v} $. In $\widetilde {S\Gamma}$, if we identify
each copy of $\widetilde {F_v}$ to a point and each copy of
$\widetilde {S_e}\times I$ to a copy of $I$, then we obtain a
graph $t$ and there is a canonical projection map $\widetilde \pi:
\widetilde{S\Gamma}\rightarrow t$. It is not hard to see that $t$
is a tree, called the \emph{Bass-Serre tree}. The action of
$\pi_1(S\Gamma)$ on $\widetilde{S\Gamma}$ descends to an action of
$\pi_1(S\Gamma)$ on $t$, where the quotient graph coincides with
the original graph $\Gamma$, and the stabilizers of each vertex
and each edge of $t$ are conjugates of corresponding fundamental
groups of $F_v$ and $S_e$.

\bigskip
\noindent\textbf{The Bestvina -- Feighn Combination Theorem}  (The material in this subsection can be found in \cite{BestvinaFeighn:combination})

For the purpose of this paper, instead of the original combination
theorem, we shall state the tailored Bestvina-Feighn Combination Theorem
in the context of the graphs of surfaces only.

 Let $S\Gamma$ be a graph of surfaces with the underlying graph
 $\Gamma$, and let $\pi: S\Gamma \rightarrow \Gamma$ be the
 projection map. Denote the preimage of the midpoint of an edge $e\in \Gamma$
 under $p$ by $S_e$. For a vertex $v\in\Gamma$, we consider the
 component containing $v$ of $\Gamma$ cut open along the midpoints
 of edges. Let $X_v$ denote the preimage of this component under
 $p$, called \emph{vertex space}. For any vertex $v\in \Gamma$,
 the vertex surface $F_v$ is a deformation retract of the vertex
 space $X_v$.

For each edge $e$ of  $\Gamma$, the lift to the universal covers
of the finite index covering map $p_e: S_e\rightarrow F_{o(e)}$ is
a quasi-isometry. This is precisely the 'quasi-isometrically
embedded condition' in \cite{BestvinaFeighn:combination}. We may
omit this condition from the hypothesis of the combination theorem
for the cases of the graphs of surfaces.

Define a continuous function $\Delta : [-k,k]\times I\rightarrow
\widetilde{S\Gamma}$ to be a \emph{hallway} of length $2k$, if for
any $i$ from $-k$ to $k$, $\Delta(\{i\}\times I)$ is a geodesic in
$\widetilde S_{e(i)}$, and $\Delta ((i,i+1)\times I)$ stays in the
interior of $\widetilde X_{v(i)}$. Suppose $\Delta ([i,i+1]\times
I)$ stays in the closure of $\widetilde X_{v(i)}$. $\Delta$ is
$\rho$-$thin$ if $d_{\widetilde X_{v(i)}}(\Delta ((i,t)),\Delta
((i+1,t)))\leq \rho$ for $i\in \{-k,-k+1,\cdots,k-1\}$ and $t\in
I$. The hallway $\Delta$ is \emph{essential} if the edge path
$e(-k)\ast\cdots\ast
 e(k)$ never backtracks in the Bass-Serre tree  $t$, i.e., $e(i)\neq
\overline e(i+1)$ for $i\in \{-k,\cdots,k-1\}$. The $girth$ of
$\Delta$ is the length of $\Delta( \{0\}\times I)$. Let
$\lambda>1$. The hallway $\Delta$ is $\lambda$-$hyperbolic$ if
$\lambda l(\Delta(\{0\}\times I))\leq max\{l(\Delta(\{-k\}\times
I)),l(\Delta (\{k\}\times I))\}$. The graph of surfaces $S\Gamma$
is said to satisfy the \emph{hallways flare condition} if there
exist numbers $\lambda>1$ and $k\geq 1$ such that for any $\rho$
there exists a constant $H(\rho)$, such that any $\rho$-thin
essential hallway of length $2k$ and girth at least $H(\rho)$ is
$\lambda$-hyperbolic.

\begin{theorem} (Combination Theorem) Let $S\Gamma$ be a
 graph of surfaces. Suppose
that $S\Gamma$ satisfies the hallways flare condition, then the
fundamental group of  $S\Gamma$ is hyperbolic.

\end{theorem}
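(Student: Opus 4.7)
My plan is to deduce Theorem~1 from the Bestvina--Feighn Combination Theorem (Theorem~2) by checking the hallways flare condition for $S\Gamma_{\boldsymbol{\varphi^m}}$ when the exponents $m_i$ are sufficiently large. The quasi-isometrically embedded condition in the original combination theorem is automatic here, since each $p_e\colon S_e\to F_{o(e)}$ is a finite cover and so lifts to a quasi-isometry of universal covers; this is exactly why the tailored Theorem~2 requires only hallways flare.

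The engine of the hallway flare argument will be a generalization of Mosher's parallel correspondence lemma. Mosher handled a single vertex with a single edge surface, comparing two pseudo-Anosov homeomorphisms of the same $S$ via their fixed points in $\mathcal{PMF}(S)$. In our setting the edges $i\in I_v$ at a vertex $v$ give coverings $p_i\colon S_i\to F_v$ of possibly distinct edge surfaces onto the common vertex surface $F_v$, so the dynamical data must be compared in the projective tangent bundle $PF_v$. Pushing the unit tangent space $T\Lambda^s_i$ of the stable lamination of $\phi_i^{m_i}$ down via $Dp_i$ yields a compact subset of $PF_v$, and the hypothesis that each $Dp_i\mid T\Lambda^s_i$ is an injection with pairwise disjoint images is precisely what substitutes for Mosher's independence condition. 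The new parallel correspondence lemma should then say that as a hallway turns at $v$ from $S_{\bar i}$ into $S_j$, the incoming tangent directions clustering near $Dp_i(T\Lambda^s_i)$ avoid a definite neighborhood of $Dp_j(T\Lambda^s_j)$ in $PF_v$, hence are transverse enough to $\Lambda^s_j$ that $\phi_j^{m_j}$ stretches them exponentially once $m_j$ is large.

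With that lemma in hand, the proof proceeds in three steps. First, fix $\lambda>1$ and choose $k\ge 1$ larger than (roughly) the diameter of $\Gamma$, so that any essential hallway of length $2k$ must turn at enough vertices to benefit from a positive fraction of good turns. Second, for each vertex $v$ combine the new parallel correspondence lemma with standard train--track expansion estimates for pseudo-Anosov homeomorphisms to produce, for each $\rho$, a local girth threshold above which a single $\rho$-thin turn at $v$ contributes a definite exponential stretch to the transverse dimension of the hallway. Third, assemble these local vertex estimates along a non-backtracking edge path of length $2k$ in the Bass--Serre tree to get the global $\lambda$-hyperbolicity of the hallway, taking the exponents $m_i$ large enough that the stretch at each good turn dominates distortion accumulated elsewhere. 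Feeding this into Theorem~2 yields hyperbolicity of $\pi_1(S\Gamma_{\boldsymbol{\varphi^m}})$.

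The main obstacle is the new parallel correspondence lemma itself, and in particular handling the case where the two edge surfaces $S_i$ and $S_j$ meeting at $v$ are distinct and related to $F_v$ only through their respective covers. One must transport tangent directions close to $\Lambda^s_i$ via $Dp_i$ into $PF_v$, across the turn via the regluing $g_i\circ\phi_i^{m_i}$, and then back up through $Dp_j^{-1}$, and show that generic such directions avoid a uniform neighborhood of $T\Lambda^s_j$ in $PS_j$. The injectivity of $Dp_i\mid T\Lambda^s_i$ prevents collapse of directions at the push-down, while the disjointness of the images $Dp_i(T\Lambda^s_i)$ in $PF_v$ keeps the ``bad'' direction set at a definite $PF_v$-distance from the ``good'' incoming directions; together they supply the uniform transversality needed to run the pseudo-Anosov expansion estimate at every turn, and thus the whole argument.
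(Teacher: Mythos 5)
Your proposal does not prove the statement it was asked to prove. The statement here is Theorem~2 itself, the tailored Combination Theorem: for a graph of surfaces, the hallways flare condition alone implies hyperbolicity of the fundamental group. You instead take this theorem as a black box and sketch the proof of Theorem~1 (the main theorem) — choosing the exponents $m_i$, generalizing Mosher's parallel correspondence lemma, and verifying the flare condition at each vertex. That is the content of Section~3 of the paper, not of the statement at hand, and using Theorem~2 as your starting point makes the argument circular with respect to the target: none of your steps establishes why flaring hallways force $\pi_1(S\Gamma)$ to be hyperbolic.

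What is actually needed, and what the paper does, is a reduction to the original Bestvina--Feighn Combination Theorem, whose hypotheses are two: the quasi-isometrically embedded condition for the edge-to-vertex maps and the hallways flare condition. The point of the tailored statement is that for a graph of surfaces the first hypothesis is automatic: each $p_e\colon S_e\to F_{o(e)}$ is a finite covering (a $\pi_1$-injective map of closed hyperbolic surfaces), so its lift to universal covers is a quasi-isometry, uniformly over all edges of the finite graph; hence only the flare condition must be assumed. One must also reconcile the bookkeeping differences the paper flags in its remarks: the hallways in the theorem are measured against vertex surfaces rather than vertex spaces (legitimate because $F_v$ is a deformation retract of $X_v$, so their universal covers are quasi-isometric), and vertex hallways flaring implies edge hallways flaring since the edge surfaces are quasi-isometric to the vertex spaces via the covering maps. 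You do mention in passing that the qi-embedded condition is automatic, which is the right observation, but you then spend the entire argument on verifying flaring for $S\Gamma_{\boldsymbol{\varphi^m}}$ — which is irrelevant to this statement, since here the flare condition is a hypothesis, not something to be proven.
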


\emph {Remarks}: 1. Notice that in the Bestvina-Feighn's
combination theorem, the vertex spaces are used in defining the
hallways; but in the proof of Theorem 1, we use the vertex
surfaces instead. We are allowed to do so, because the vertex
surface is a retraction of the corresponding vertex space, and
their universal covers are quasi-isometric to each other.

2. The hallways in the combination theorem are 'edge hallways',
i.e., the rungs $\Delta(i)\times I$ of the hallway $\Delta$ are
geodesics in the edge surfaces. In Theorem 1, the hallways are
'vertex hallways', i.e., $\Delta(i)\times I$ are geodesics in the
vertex surfaces. Since the covering map from an edge surface to a
vertex space is a quasi-isometry, and the vertex spaces and the
vertex surfaces are quasi-isometric,  if the vertex hallways flare
condition is satisfied then the edge hallways flare condition is
satisfied.

3. For the cases studied in this paper, we will prove the hallways
flares condition for length 2 hallways only.

\bigskip

 \noindent\textbf{Construction of pseudo-Anosov homeomorphisms} (The materials is covered by
\cite{Penner:recipe})

 In a surface $S$, $\mathcal C$ is an \emph{essential curve system}, if $\mathcal
C=\{c_1,\ \cdots,\ c_n\}$, where $c_1,\ \cdots,\ c_n$ are
non-trivial simple closed curves on $S$ which are pairwise
disjoint and pairwise non-homotopy.

Let $\mathcal C$ and $\mathcal D$ be two disjoint  essential curve systems, $\mathcal C$ hits
$\mathcal D$ \emph{efficiently} if $\mathcal C$ intersect $\mathcal D$ transversely, and no component on
$S\backslash(\mathcal C \cup \mathcal D)$ is a \emph{bigon}, an
interior of a disc whose boundary consists of one arc of $C\in
\mathcal C$ and one arc of $D\in \mathcal D$. We say that
$\mathcal C \cup \mathcal D$ \emph{fills} $S$ if the components of
the complement of $(\mathcal C\cup \mathcal D)$ are disks.

The following shows how to construct pseudo-Anosov homeomorphisms.
\begin{theorem}(\cite{Penner:recipe})
Suppose that $\mathcal C$ and $\mathcal D$ are essential curve
systems in an oriented surface $F$ so that $\mathcal C$ hits
$\mathcal D$ efficiently and $\mathcal C\cup \mathcal D$ fills
$F$. Let $R(\mathcal C^{+}, \mathcal D^{-})$ be the free semigroup
generated by the Dehn twists $\{\tau_c^{+}:c\in \mathcal C \} \cup
\{\tau_d^{-1}:d\in \mathcal D\}$. Each component map of the
isotopy class of $\omega \in R(\mathcal C^{+},\mathcal D^{-})$ is
either the identity or pseudo-Anosov, and the isotopy class of
$\omega$ is itself pseudo-Anosov if each $\tau_c^{+}$ and
$\tau_d^{-1}$ occur at least once in $\omega$.\label{penner}
\end{theorem}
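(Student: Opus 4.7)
The plan is to follow Penner's original argument, which is essentially an application of Perron--Frobenius theory to the linear action of Dehn twists on a suitable space of measured laminations. First I would assemble the combinatorial data: since $\mathcal{C}\cup\mathcal{D}$ fills $F$ efficiently, the union $\mathcal{C}\cup\mathcal{D}$ is itself a \emph{bigon track} $\tau$ once one smooths out the transverse intersection points along a consistent choice of ``corners'' compatible with the orientations of the twists (positive along $\mathcal{C}$, negative along $\mathcal{D}$). Every element of $R(\mathcal{C}^+,\mathcal{D}^-)$ leaves the lamination space carried by $\tau$ invariant, so each $\omega$ acts linearly on the cone $V(\tau)\subset\mathbb{R}_{\geq 0}^{|\mathcal{C}|+|\mathcal{D}|}$ of non-negative weights satisfying the switch conditions.

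Next I would compute the matrices of the generators in the basis indexed by the components of $\mathcal{C}\cup\mathcal{D}$. A direct count of intersections $\iota(c_i,d_j)$ shows that $\tau_c^+$ acts as $I+E_c$ and $\tau_d^{-1}$ acts as $I+E_d$, where $E_c$ and $E_d$ are non-negative matrices whose non-zero entries are exactly recorded by these intersection numbers. Consequently any $\omega\in R(\mathcal{C}^+,\mathcal{D}^-)$ acts by a matrix $M_\omega$ with non-negative entries. The key combinatorial observation is then: because $\mathcal{C}$ hits $\mathcal{D}$ efficiently and $\mathcal{C}\cup\mathcal{D}$ fills $F$, for every ordered pair $(x,y)$ of curves in $\mathcal{C}\cup\mathcal{D}$ one can find a chain of intersections linking them, so that any $\omega$ in which \emph{every} generator $\tau_c^+$ and $\tau_d^{-1}$ appears at least once produces an $M_\omega$ that is Perron--Frobenius (eventually strictly positive).

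From here I would invoke Perron--Frobenius directly: $M_\omega$ has a simple largest eigenvalue $\lambda>1$ with a strictly positive eigenvector $\mathbf{v}$, and the transpose $M_\omega^T$ (which governs the dual transverse measure) similarly has a positive eigenvector $\mathbf{v}^*$ with the same eigenvalue. Interpreting $\mathbf{v}$ and $\mathbf{v}^*$ as transverse measures on a pair of transverse measured foliations $\mathcal{F}^s,\mathcal{F}^u$ on $F$, the pair $(\mathcal{F}^s,\mathcal{F}^u)$ is invariant under $\omega$ with dilatations $1/\lambda$ and $\lambda$; since the supporting track fills $F$, these foliations have no leaves entirely contained in complementary discs, and $\omega$ is pseudo-Anosov. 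For the weaker part of the statement (when not every generator appears), one argues on each $\omega$-invariant subsurface cut out by the ``missing'' curves: on each component the same Perron--Frobenius argument applies to a smaller filling subsystem, yielding either a pseudo-Anosov restriction or the identity.

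The technical heart of the argument --- and what I expect to be the main obstacle to write down carefully --- is showing that the positivity of $M_\omega$ really comes from the filling/efficient hypothesis, and that the eigenvectors yield genuine measured foliations on $F$ (not merely abstract weights on $\tau$). This requires a careful inspection of how the bigon track $\tau$ sits in $F$, a verification that the Perron--Frobenius eigenvectors satisfy the switch conditions strictly (so the associated lamination is supported on all of $\tau$), and a check that the orthogonal foliation constructed from $\mathbf{v}^*$ really is transverse to the one built from $\mathbf{v}$ on the filled surface. Once these are in place, the Nielsen--Thurston classification identifies $\omega$ as pseudo-Anosov with stretch factor $\lambda$.
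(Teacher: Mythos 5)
The paper does not prove this statement at all---it is quoted verbatim from Penner's paper \cite{Penner:recipe} and used as a black box---so there is no internal proof to compare against. Your sketch is essentially Penner's original argument (smooth $\mathcal C\cup\mathcal D$ into a bigon train track, represent the twists $\tau_c^{+}$, $\tau_d^{-1}$ by non-negative matrices on the weight space, use efficiency and filling to get a Perron--Frobenius matrix when every generator occurs, and promote the positive eigenvector to an invariant filling measured foliation), so it is the right approach and correct in outline, with the acknowledged technical work deferred to exactly the points Penner's paper handles.
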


\bigskip

\noindent\textbf{Surface group extensions} (The materials is
covered by \cite{Mosher:Geosurvmcg} and
\cite{farbmosher:convexcocompmcg})

 A \emph{surface group
extension} is a short exact sequence of the form
\begin{equation}
\begin{CD}
 1\rightarrow\pi_1(S,x)\rightarrow \Gamma\rightarrow G\rightarrow 1
\end{CD}
\label{surf extension}
\end{equation}
where $S$ is a closed, oriented surface of genus $g\geq 2$. The
canonical example is the sequence
\begin{equation}
\begin{CD}
 1\rightarrow\pi_1(S,x)@>i>>MCG(S,x)@>q>>MCG(S)\rightarrow 1
\end{CD}
\label{canon surf exten}
\end{equation}
where $MCG(S)$ is the mapping class group of $S$, $MCG(S,x)$ is
the mapping class group of $S$ punctured at $x$. This short exact
sequence is universal for surface group extension, meaning that
for any extension as in (\ref{surf extension}), there exists a
commutative diagram
\begin{equation}
\begin{CD}
1@>>>\pi_1(S,x)@>>>\Gamma@>>> G@>>>1\\
@.@VVV@VVV@V\alpha VV\\
1@>>>\pi_1(S,x)@>i>>MCG(S,x)@>q>>MCG(S)@>>> 1
\end{CD}
\end{equation}
where $\Gamma$ is identified with the pushout group
\begin{equation}
\Gamma_{\alpha}=\{(\phi,\gamma)\in MCG(S,x)\times G|
q(\phi)=\alpha(\gamma)\},
\end{equation}
 $\alpha$ is a homomorphism from $G$ to $MCG(S)$, and the
homomorphisms $\Gamma\rightarrow G$ and $\Gamma\rightarrow
MCG(S,x)$ are the projection homomorphisms of the pushout group.
We are more interested in the case where $\alpha$ is an inclusion.

\bigskip

\noindent \textbf{Virtual centralizer of $\Phi$ } (The material is covered by  \cite{Mosher:Geosurvmcg})
 
 Given a subgroup $H$ of a group $G$, the \emph{virtual centralizer} $VC(H)$ of $H$
in $G$ is the subgroup of all $g\in G$ which commute with a finite
index subgroup of $H$. The virtual centralizer of an infinite
cyclic pseudo-Anosov subgroup has a nice geometric description.
Let $\mathcal {PML(S)}$ denote the space of projective measured
laminations of the surface $S$. Let $\Lambda^s,\ \Lambda^u\subset
\textbf{P} \mathcal {ML}$ be the fixed points of a pseudo-Anosov
mapping class $\Phi$, and let $Fix\{\Lambda^s,\Lambda^u\}$ denote
the subgroup in $MCG(S)$ whose elements fix $\Lambda^s,\
\Lambda^u$ point wise. \cite{Mosher:Geosurvmcg} shows that
$Fix\{\Lambda^s,\Lambda^u\}=VC\langle\Phi\rangle$.
\bigskip

\noindent\textbf{Facts of hyperbolic geometry} (The material is
covered by \cite{BridsonHaefliger} and
\cite{CassonBleiler:handwritten})

 Our proofs make heavy use of the following facts of hyperbolic space, $H^{2}$,
 geometry:

\emph{Fact 1}. For any $0<\delta<1$, and $D>0$, there exists $l(\delta,
D)$, such that if $\gamma, \alpha$ are geodesic segments of length
at least $l(\delta,D)$, and the end points $x,\ y$ of $\gamma$
have distance at most $D$ from the end points $x^{\prime},\
y^{\prime}$ of $\alpha$ respectively, then there exist subsegments
$\gamma^{\prime}\subset \gamma$, $\alpha^{\prime}\subset \alpha$
of lengths at least $(1-\delta)Length(\gamma)$ and
$(1-\delta)Length(\alpha)$ respectively, such that the Hausdorff
distance between $\gamma^{\prime}$ and $\alpha^{\prime}$ is less
than $\delta$.

Roughly speaking, for any two geodesic segments, if their end
points have bounded distances from each other, then most part of
them can be arbitrarily close to each other as long as the
segments are long enough.

\emph{Fact 2}. Given $k\geq 1,\ c\geq 0$, there exists a constant
$N_0(k,c)$, such that any $(k, c)$ quasi-geodesics line or segment
in $H^{2}$ has Hausdorff distance at most $N_{0}(k, c)$ from a
geodesic line or segment with the same end points.

\emph{Fact 3}. Let $\Lambda_1$ and $\Lambda_2$ be two minimal geodesic
laminations filling a hyperbolic surface $S$. If their lifts
$\widetilde \Lambda_1$ and $\widetilde \Lambda_2$ on the universal
cover of $\widetilde S$ have at least one end point in common,
then $\Lambda_1=\Lambda_2$. A geodesic lamination $\Lambda$ is
\emph{minimal} if every leaf $L$ is dense, that is, $\overline
L=\Lambda$. A geodesic lamination $\Lambda\subset S$ is a
\emph{filling} lamination if no simple closed curve in $S$ is
disjoint from $\Lambda$.

 The
reason is that two minimal filling surface geodesic laminations
either transversely intersect with each other or are equal to each
other.

From \cite{FLP}, we know that the stable and unstable geodesic
laminations of a pseudo-Anosov homeomorphism are minimal and
filling.

\bigskip

\section {Main Theorem}
We will give a new version of Mosher's parallel corresponds lemma
and use it to prove Theorem 1. Moreover we will reformulate the
hypothesis of Theorem 1. The original corresponds lemma of Mosher is in \cite{Mosher:hypbyhyp}.

\subsection {New version of the parallel corresponds lemma}

Consider a pseudo-Anosov mapping class $\Phi\subset MCG(S)$, let
$\phi\in Homeo(S)$ be a pseudo-Anosov representative with the
stable and unstable measured foliations $f_{\phi}^{s},\
f_{\phi}^{u}$. Recall that the transverse measures on
$f_{\phi}^{s}$ and $f_{\phi}^{u}$ define a singular Euclidean
structure on $S$, with isolated cone singularities. We call the
leaves of $f_{\phi}^{s}$  the horizontal leaves and the leaves of
$f_{\phi}^{u}$  the vertical leaves. The singular Euclidean
structure determines a metric $d_{\phi}$ on $S$ for which each
path can be homotopic to a unique geodesic rel end points. The
lifts to the universal covers of the hyperbolic metric and the
singular Euclidean metric are quasi-isometric equivalent.

In the following, for a homotopy class $\gamma$ of a curve rel. end points, let $\gamma^h$ denote
the hyperbolic geodesic segment in the homotopy class of $\gamma$,
and let $\gamma^E$ denote the singular Euclidean geodesic segment
in the same homotopy class. Let  $|\cdot|$ denote the hyperbolic
metric, and let $|\cdot|_E$ denote the singular Euclidean metric.
For a homotopy class $\gamma$, let $|\gamma|$ denote the
hyperbolic length of $\gamma^h$, let $|\gamma|_E$ denotes the
singular Euclidean length of $\gamma^E$.

Given  $0<\eta<1$, define $slope_{\phi}^{\eta}$ to be the set of
all homotopy classes $\gamma$, such that the (unsigned) Euclidean
angle between $\gamma^E$ and $f_{\phi}^{s}$ is at least $\eta$, on
a subset of $\gamma^E$ of length at least $\eta\cdot Length \
\gamma^E$. Given $\lambda>1$, let
$stretch_{\phi}^{\lambda}=\{\gamma\mid
|(\phi(\gamma))|>\lambda|\gamma|\}$. Let $n$ be a large enough
integer, such that if the vector $v\in E^2$ has an angle at least
$\eta$ with the horizontal axis, then the matrix
$$
\left(
\begin{array}{cc}
\lambda_{\phi}^{-n} & 0\\
0 & \lambda_{\phi}^{n}
\end{array}
\right)
$$
stretches $v$ by a factor of at least $\lambda/\eta$, where
$\lambda_{\phi}=\lim_{i\rightarrow \infty}|\phi^i(\alpha)|^{1/i}$ is the stretching factor of $\phi$, $\alpha$ is a simple closed geodesic on $S$. Since the
singular Euclidean metric is quasi-isometric to the hyperbolic
metric, it follows that given $\phi$, $0<\eta<1$, and $\lambda>1$,  there exists $N$
such that if $n\geq N$, then $slope_{\phi}^{\eta}\subset
stretch_{\phi^n}^{\lambda}$.

An $\eta$-$lever$ is a homotopy from a singular Euclidean geodesic
segment $\alpha$ to a horizontal segment $\beta$, where $\beta$ is
a segment of a nonsingular leaf of the horizontal foliation
$f_{\phi}^s$, such that each track of the homotopy is a vertical
geodesic segment, maybe degenerate, and each point of
int($\alpha$) is disjoint from singularities during the homotopy,
and int($\alpha$) makes an angle at most $\eta$ with the
horizontal leaves. In \cite{Mosher:hypbyhyp}, $\beta$ is not
necessary to be a segment of a nonsingular leaf. But we can always
make $\beta$ be a segment of a nonsingular leaf, because there
exist nonsingular leafs which are arbitrary close to a singular
leaf. Notice that the angle between a singular Euclidean geodesic
and the horizontal leaves changes only  when the singular
Euclidean geodesic passes a singularity. Therefore the interior of
$\alpha$ has a constant angle with the horizontal leaf.

A lever is denoted by $(\alpha,\beta)$, where $\alpha$ is called
the \emph{inclined edge} of the lever, and $\beta$ is called the
\emph{horizontal edge} of the lever. A lever is \emph{maximal} if
and only if a singularity contained in the track of each end point
of $\alpha$. The \emph{length} of the lever is $|\alpha|_E$,
the \emph{height} of the lever is the maximum length of the tracks
of the points of $\alpha$, which is achieved at the endpoints.

\begin{proposition}
For any $l$, $H>0$, there exists $\eta(l, H)>0$, so that every
maximal $\eta$-lever has length at least $l$ and height at most
$H$.
\end{proposition}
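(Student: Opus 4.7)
The plan is to argue by contradiction, with the ultimate contradiction drawn from the minimality of the horizontal foliation $f_\phi^s$ (Fact~3 of the preliminaries), which forbids saddle connections. Suppose the conclusion fails for some fixed $l,H>0$, and extract a sequence of maximal $\eta_n$-levers $(\alpha_n,\beta_n)$ with $\eta_n\downarrow 0$ and with either $|\alpha_n|_E<l$ for all $n$ or $\mathrm{height}(\alpha_n,\beta_n)>H$ for all $n$. Lifting to the universal cover $\widetilde S$ with the pulled-back singular Euclidean metric, each $\alpha_n$ is a straight segment of slope $\tan\theta_n$ with $\theta_n\leq\eta_n$ (using the constant-interior-angle observation immediately preceding the proposition), and the homotopy sweeps out a trapezoidal region $R_n$ bounded by $\alpha_n$ on top, $\beta_n$ on the bottom, and two vertical endpoint tracks on the sides. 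By the definition of a lever $\mathrm{int}(R_n)$ contains no singularities of $f_\phi^s$, and by maximality each vertical side contains at least one such singularity; call these corner singularities $p_n,q_n$.

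The first step is to reduce to a fixed combinatorial configuration. Since the singularities of $f_\phi^s$ form a finite set in $S$ lifting to a discrete $\pi_1(S)$-equivariant set in $\widetilde S$, I pigeonhole on the projections of $(p_n,q_n)$ to $S$ and translate by a deck transformation so that $p_n=\tilde p$ is a fixed lift. The main obstacle, and the technical heart of the argument, is to show that the lifts $q_n$ remain in a bounded region of $\widetilde S$. This combines the bounded-length or bounded-height hypothesis with the constraint that $\mathrm{int}(R_n)$ is singularity-free: the latter caps the area of $R_n$ by a multiple of $\mathrm{area}(S)$ via a covering argument (otherwise a translate of some singularity of $S$ would be forced into the interior of $R_n$). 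Discreteness of the singularity lifts then yields a further subsequence on which $q_n=q_\infty$ is constant.

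With the combinatorics fixed, $\theta_n\to 0$ forces $\alpha_n\to\alpha_\infty$ to a horizontal segment, and $R_n$ converges to a rectangle $R_\infty$ whose two vertical sides contain $\tilde p$ and $q_\infty$ and whose interior is singularity-free (by discreteness of the singularity lifts). I would then exploit this limit configuration, combined with the constraints coming from the locations of the endpoints of $\alpha_n$ on the vertical separatrices of $\tilde p$ and $q_\infty$ and the slope bound $\tan\theta_n\to 0$, to extract a horizontal leaf segment joining two singularities of $f_\phi^s$ inside $R_\infty$, i.e., a saddle connection. Since $f_\phi^s$ is minimal, Fact~3 of the preliminaries rules out any saddle connection, and this contradiction completes the proof.
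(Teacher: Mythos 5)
Your overall strategy (argue by contradiction, use finiteness of the singularities and a limiting argument to produce a forbidden horizontal saddle connection of $f_{\phi}^s$) is in the same spirit as the argument the paper points to -- note the paper gives no self-contained proof here, it simply defers to the first seven paragraphs of the sublemma on p.~3451 of \cite{Mosher:hypbyhyp}. But as written your sketch has genuine gaps at exactly the points where the work lies. First, the boundedness step is wrong: the claim that a singularity-free interior ``caps the area of $R_n$ by a multiple of $\mathrm{area}(S)$, otherwise a translate of a singularity would be forced into the interior'' is false, because the singularities form a measure-zero set and the immersed region may cover $S$ with multiplicity; large area does not force a singularity inside. (The correct tool is minimality of the vertical foliation: a vertical leaf segment staying a definite flat distance from all singularities has uniformly bounded length, which bounds the height of a singularity-free rectangle in terms of its width.) Moreover, in the branch where you only assume $\mathrm{height}>H$, you have no bound on $|\alpha_n|_E$ at all, so $R_n$ need not stay in any bounded region and the pigeonhole fixing $q_n$ does not apply; even a bound on area would not bound the diameter of a long thin trapezoid. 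So that branch is not actually handled.

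Second, and more seriously, the final step -- ``extract a horizontal leaf segment joining two singularities inside $R_\infty$'' -- is precisely the crux, and it does not follow from the limit configuration you have set up. A singularity-free rectangle with one singularity somewhere on the interior of each vertical side produces no saddle connection: the two singularities may sit at different heights, and the horizontal separatrix of one simply crosses the rectangle and exits the opposite side at its own height, meeting no singularity. Such rectangles exist on flat surfaces, so no contradiction is available at this level of generality; to conclude you must use the lever structure harder, e.g.\ first show that the heights of the maximal levers tend to $0$ as $\eta_n\to 0$ (so that in the limit the two side singularities are forced onto the same horizontal leaf, within bounded horizontal distance, which is what yields the saddle connection and then contradicts minimality), and that is the part of the cited argument your sketch does not reproduce. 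A minor point: Fact~3 of the preliminaries concerns two minimal filling laminations whose lifts share an endpoint at infinity; it is not the statement that $f_\phi^s$ has no saddle connections. The relevant input is the minimality of the stable/unstable foliations of a pseudo-Anosov map (quoted from \cite{FLP} in the preliminaries), from which the absence of saddle connections follows.
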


The proof is given in the first seven paragraphs of the proof of
the sublemma on page 3451 in  \cite{Mosher:hypbyhyp}. This
proposition will be used in the proof of the following lemma.

In the proof of the following lemma, we need some facts. It is
well known that the measured foliations $f_{\phi}^s$, $f_{\phi}^u$
can be straightened to measured geodesic laminations $l_{\phi}^s$,
$l_{\phi}^u$. Actually, there is a 1-1 correspondence between
leaves of $l_{\phi}^s$ and smooth leaves of $f_{\phi}^s$, where a
smooth leaf is either a nonsingular leaf or the union of two
singular half-leaves meeting at a singularity with angle $180^0$.
Similarly for $f_{\phi}^u$. The singularities are discrete, so the
length of any geodesic between them has a positive lower bound.

\begin{lemma}(New version of Parallel Corresponds lemma) Given any
pseudo-Anosov  homeomorphism $\phi$ and $0< \epsilon  <1$, there
exist $0<\eta<1$ and $L>0$ such that for any homotopy class
$\gamma$, if $\gamma\notin slope^{\eta}_{\phi}$ and
$|\gamma|_E\geq L$, then on a subset of $\gamma^h$ of length at
least $(1-\epsilon)Length(\gamma^h)$, the distance between the
tangent line of $\gamma^h$ and the set $\l^s_{\phi}$, measured in
$PS$, is at most $\epsilon$.
\end{lemma}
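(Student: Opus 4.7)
The plan is to use the $\eta$-lever machinery of Proposition 3 to approximate the singular Euclidean geodesic $\gamma^E$ by a broken path whose long pieces lie in smooth horizontal leaves of $f_\phi^s$, then straighten these leaves to leaves of the geodesic lamination $l_\phi^s$, and finally transfer the approximation to $\gamma^h$ using the quasi-isometry between the singular Euclidean and hyperbolic metrics together with Facts 1 and 2. The output is that on most of its length, $\gamma^h$ lies Hausdorff-close to pieces of $l_\phi^s$; the final step upgrades Hausdorff closeness to closeness of tangent lines in $PS$.

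I would choose constants in reverse order. Given $\epsilon$, fix a large lever-length target $l$ and a small height target $H$ so that, by hyperbolic trigonometry (a consequence of Fact 1 and the exponential divergence of geodesics in $H^2$), any two hyperbolic geodesic segments of length at least $l$ whose endpoints lie within distance $H+C_0$ of one another have tangent lines within $\epsilon$ in $PS$ on a middle subsegment of length at least $(1-\epsilon/4)l$; here $C_0$ absorbs the quasi-isometry constant between the singular Euclidean and hyperbolic metrics and the constant $N_0$ from Fact 2. Let $\eta_0=\eta(l,H)$ be supplied by Proposition 3, set $\eta=\min(\eta_0,\epsilon/4)$, and defer the choice of $L$ until the end.

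Under the hypothesis $\gamma\notin slope_\phi^\eta$, the \emph{bad set} $B\subset\gamma^E$ on which the Euclidean angle with $f_\phi^s$ is at least $\eta$ has total length less than $\eta|\gamma|_E$. The complement $\gamma^E\setminus B$ decomposes into maximal subarcs $\alpha_j$ on which the angle with horizontal is strictly less than $\eta$; refining at the singularities and applying Proposition 3 to each resulting maximal $\eta$-lever yields horizontal segments $\beta_j$ contained in nonsingular leaves of $f_\phi^s$, each of singular Euclidean length at least $l$, with endpoints within vertical distance $H$ of the endpoints of $\alpha_j$. Replace each $\beta_j$ by the corresponding geodesic segment $\beta_j^h$ lying in a leaf of $l_\phi^s$ (via the $1$-$1$ correspondence between smooth leaves of $f_\phi^s$ and leaves of $l_\phi^s$) and join the $\beta_j^h$ by short bridges across $B$ and across the lever tracks to form a path $p$ in the universal cover which is within bounded Hausdorff distance of $\gamma^E$, hence of $\gamma^h$ by the quasi-isometry of metrics and Fact 2. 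Fact 1 then furnishes, opposite each $\beta_j^h$, a subsegment of $\gamma^h$ of length at least $(1-\epsilon/4)|\beta_j^h|$ which is Hausdorff-close to $\beta_j^h\subset l_\phi^s$; by the choice of $l$ and $H$ in the first step, the tangent lines agree to within $\epsilon$ in $PS$ on these subsegments. Finally, take $L$ large enough that the contributions to $|\gamma^h|$ from $B$, from the bridges, and from the endpoint buffers total at most $\epsilon|\gamma^h|$.

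The main obstacle is the final step: converting Hausdorff closeness of $\gamma^h$ to a leaf of $l_\phi^s$ into closeness of tangent lines in $PS$. This passage is valid only in the interior of long comparison segments, since two hyperbolic geodesics whose endpoints lie within $H$ of one another need not have nearly parallel tangents near the endpoints; however, by the exponential sharpening of $H^2$, their tangent directions align to within $\epsilon$ on a middle subsegment whose length-deficit depends on $H$ and $\epsilon$ alone. Taking $l\gg H$ forces this deficit to be a negligible proportion of each $\beta_j^h$, and taking $L$ large then forces the total length of bridges and bad arcs to be a negligible proportion of $|\gamma^h|$, completing the argument.
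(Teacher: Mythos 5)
Your proposal follows essentially the same route as the paper's proof: decompose $\gamma^E$ into maximal $\eta$-levers via the proposition on lever length and height, straighten the horizontal edges to leaves of $l_\phi^s$, compare them with the corresponding subsegments of $\gamma^h$ using Facts 1--2 and the quasi-isometry between the singular Euclidean and hyperbolic metrics, and finish with a length count showing that the bad portions (high-slope arcs, lever/endpoint buffers) are a negligible fraction of $\gamma^h$ once $\eta$ is small and $|\gamma|_E\geq L$. The only notable differences are that you make explicit the upgrade from Hausdorff closeness to closeness of tangent lines in $PS$ (via fellow-traveling of long geodesic segments in $H^2$), a step the paper leaves implicit, and that your fixed choice $\eta=\min(\eta_0,\epsilon/4)$ would still need to absorb the bi-Lipschitz constant $k$ in the final count, exactly as in the paper's $\epsilon_1$-bookkeeping; both are routine adjustments rather than gaps.
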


The differences between the Parallel Corresponds lemma in
\cite{Mosher:hypbyhyp} and this new version are as follows. In
\cite{Mosher:hypbyhyp}, the Parallel corresponds lemma works only
for closed based geodesics, and the word metric is used to define
the stretching factor; in this paper, the new version of the
parallel corresponds lemma works for non closed geodesics as well,
and the hyperbolic metric is used to define the stretching factor.

\begin{proof}

The first step is to find long subsegments $\alpha_i\subset
\gamma^E$ and segments $\beta_i$ of leaves of $f_{\phi}^s$, such
that $\alpha_i$ is homotopic to $\beta_i$ by homotoping through
short paths. Then we shall project $\alpha_i$ to a subsegment of
$\gamma^h$ and project $\beta_i$ to a segment of a leaf $B_i^h$ of
$l_{\phi}^s$, and show that a big portion of these projections are
very close to each other. Finally we shall prove most part of
$\gamma^h$ are covered by  big portion of these projections.

For  $\gamma\notin Slope_{\phi}^{\eta}$, let
$\{(\alpha_i,\beta_i)\}$ be the set of all maximal $\eta$-levers
of $\gamma^E$, where the inclined edge $\alpha_i$ is a subsegment
of $\gamma^E$ and the horizontal edge $\beta_i$ is a segment of
some non-singular leaf $B^E_i$ of $f_{\phi}^s$.

Step 1: first, let  $H=1$, by proposition 4, we know that for any
$l>0$, there exists $\eta>0$ such that every maximal $\eta$-lever
$\{(\alpha_i,\beta_i)\}$ has length at lest $l$ and height at most
$H=1$. The first step is proven.

Step 2: we shall construct long subsegments of $\gamma^h$ from the
inclined edges $\alpha_i\subset \gamma^E$ of the maximal levers,
such that these long subsegments of $\gamma^h$ have small distance
with $l_{\phi}^s$ measured in $PS$. In the rest of this lemma, the
distance and length mean hyperbolic distance and length, otherwise
we will use the notations Euclidean distance and length. I will
use the notation "E" to represent the Euclidean distance and
length.

We know that any non-singular leaf $B_i^E$ of $f_{\phi}^s$ is a
$k,c$ quasi-geodesic under the hyperbolic metric, and it can be
straightened to  a unique leaf $B_i^h$ of $l_{\phi}^s$. Let
$\delta_i\subset \gamma^h$ and $\sigma_i\subset B^h_i$ denote the
closest point projections from $\alpha_i\subset \gamma^E$ to
$\gamma^h$, and  from $\beta_i\subset B_i^E$ to $B^h_i$
respectively. We shall see that most portion of $\delta_i$ has
small distance with $\sigma_i$, for all $i$.

Since $\gamma^E$ is a $k,c$ quasi-geodesic segment contained in the
$N_0(k,c)$ neighborhood of $\gamma^h$, and $\delta_i$,
$\alpha_i$ are subsegments of $\gamma^h$, $\gamma^E$ respectively,
it follows that the distances between the end points of $\delta_i$
and $\alpha_i$ are not greater than $N_0$. For the same reason,
the distance between the end points of $\sigma_i$ and $\beta_i$
are not greater than $N_0$. The singular Euclidean distances
between the end points of $\beta_i$ and $\alpha_i$ is less than
the height $H=1$.  The  hyperbolic distances between the end point
of them are at most $mk$ for some $m>0$, because  the singular
Euclidean and hyperbolic metric are $k,c$ quasi-isometric to each
other. Therefore the distances between the end points of
$\delta_i$ and $\sigma_i$ are less than $2N_0+mk$. According to
Fact 1, for any $\epsilon_1>0$, there exists $L_1$ depending on
$2N_0+mk$ and $\epsilon_1$, if the length of $\delta_i$ is greater
than $L_1$, then more than $(1-\epsilon_1)|\delta_i|$ part of
$\delta_i$ has distance less than $\epsilon_1$ with $\sigma_i$.

The condition on the length of $\delta_i$ greater than $L_1$ is
easy to satisfy. Since $\alpha_i$ is a quasi-geodesic segment
whose end points have distances less than $N_0$ with the end
points of $\delta_i$, there exists $l_1>0$, such that if the
Euclidean length of $\alpha_i$ is greater that $l_1$, then the
length of $\delta_i$ is greater than $L_1$. By applying the step 1, we may now choose
$\eta$ small enough, so that the Euclidean length of $\alpha_i$ is
greater than $l_1$ for any $i$. Therefore more than
$(1-\epsilon_1)|\delta_i|$ part of $\delta_i$ has distance less
than $\epsilon_1$ with $\sigma_i$.

So far, we have proved that for any $\epsilon_1$, there exists
$\eta$, such that if $\gamma\notin slope_{\phi}^{\eta}$, then we
can locate long subsegments $\delta_i$ of $\gamma^h$, such that
more than $(1-\epsilon_1)$ of the length of $\delta_i$ has
distance less than $\epsilon_1$ with $\sigma_i\subset B_i^h$, for
any $i$.

Step 3: we will prove that $(1-\epsilon_1)\sum_i|\delta_i|$ part
of $\cup_i(\delta_i)$ covers most part of $\gamma^h$. We call this
$(1-\epsilon_1)\sum_i|\delta_i|$ part of $\cup_i(\delta_i)$ the
'good' part of $\gamma^h$.

Since  $\gamma\notin slope^{\eta}_{\phi}$, on a subset of
$\gamma^E$ of length at least $(1-\eta)|\gamma|_E$, the angle
between $\gamma^E$ and $f_{\phi}^s$ is less than $\eta$, i.e., the
$\eta$-levers cover more than $(1-\eta)$ part of $\gamma^E$. The
worst situation is that the two end subsegments of $\gamma^E$ are
covered by $\eta$-levers with lengths less than $l_1$. In this
case, after straightening, the end subsegments of $\gamma^h$ may
not have distances less than $\epsilon_1$ with $B^h$. We will only
prove this lemma for the worst situation, i.e., more than
$(1-\epsilon_1)|\gamma|_E$ part of $\gamma^E$ is covered by the
union of the maximal $\eta$-levers $(\alpha_i,\beta_i)$ and two
end $\eta$-levers which cover the two end segments of $\gamma^E$
respectively and with lengths less than $l_1$.

In the following the quasi-isometries will be replaced by
bi-Lipschitz maps when dealing with long segments. In the rest of
this proof, let $|\alpha_i|$ denote the length of the hyperbolic
geodesic which is homotopic to $\alpha_i$ rel. end points, and let
$|\alpha_i|_E$ denote the Euclidean length of the singular
Euclidean geodesic $\alpha_i$. Keep in mind that none of the
following $\delta_i$ is the projection of an end subsegment of
$\gamma^E$.

\begin{align*}
(1-\epsilon_1)\sum_i|\delta_i|&\geq
(1-\epsilon_1)(\sum_i(|\alpha_i|-2N_0))\\
&\geq(1-\epsilon_1)(\sum_i(\frac{|\alpha_i|_E}{k}-2N_0))\\
\intertext{According to Proposition 4, we can take $\eta$ to be
small enough, so that $|\alpha_i|_E\geq l_2=4kN_0$ for any $i$}
&\geq (1-\epsilon_1)\frac{\sum_i|\alpha_i|_E}{2k}\\
\intertext{Since the union of the $\eta$-levers-the maximal
$\eta$-levers and the two end $\eta$-levers, covers more than
$(1-\epsilon_1)|\gamma|_E$ part of $\gamma^E$, and we suppose that
the two end $\eta$-levers have lengths less than $l_1$,}
&\geq (1-\epsilon_1)\frac{(1-\epsilon_1)|\gamma|_E-2l_1}{2k}\\
\intertext{Take $|\gamma|_E$ to be long enough, so that
$|\gamma|_E\geq L_2=\frac{2l_1}{\epsilon_1}$}
&\geq (1-\epsilon_1)\frac{(1-2\epsilon_1)|\gamma|_E}{2k}\\
&\geq \frac{(1-2\epsilon_1)^2|\gamma|_E}{2k}\\
\end{align*}

Hence, $(1-\epsilon_1)\sum_i|\delta_i|\geq
\frac{(1-2\epsilon_1)^2|\gamma|_E}{2k}$.

The `bad' parts of $\gamma^h$ are of three kinds. The first kind
of bad part is the two end subsegments of $\gamma^h$ which have
lengths less than $L_1$. The sum of the lengths of  the end
subsegments of $\gamma^h$ is at most $2L_1$. We can take
$|\gamma|_E$ to be big enough such that $2L_1\leq
\epsilon_1|\gamma|_E$.

 The second kind of bad part of $\gamma^h$ is the
$\epsilon_1|\delta_i|$ part of $\delta_i$'s which may be out of
the $\epsilon_1$ neighborhood of $\sigma_i$. Since the projection
map can not prolong length, and the distances between the ends of
$\alpha_i$ and $\delta_i$ are not greater than $N_0$,

\begin{align*}
\sum_i\epsilon_1|\delta_i|&\leq
\epsilon_1\sum_i(|\alpha_i|+2N_0)\\
 \intertext{We can take $\eta$ to be small enough, so that $|\alpha_i|_E\geq l_2=4kN_0$ for any $i$.
The singular Euclidean metric and the hyperbolic metric are $k$
bi-Lipschitz shows that $|\alpha_i|_E\leq k|\alpha_i|$. Therefore
$2N_0\leq 2kN_0\leq \frac{|\alpha_i|}{2}$}
&\leq \epsilon_1\frac{3}{2}\sum_i|\alpha_i|\\
&\leq \epsilon_1\frac{3}{2}k\sum_i|\alpha_i|_E\\
&\leq \epsilon_12k|\gamma|_E\\
\end{align*}

The third kind of bad part of $\gamma^h$ are the projections of
$\epsilon_1|\gamma|_E$ part of $\gamma^E$ which has slope greater
than $\epsilon_1$ with $f_{\phi}^s$. Let $\xi_i$ denote this kind
of subsegment of $\gamma^E$. There is a lower bound $b$ of the
Euclidean lengths of $\xi_i$ for all $i$, which equals the minimum
of the Euclidean distances between singularities. The sum of the
lengths of the projections from $\xi_i$ to $\gamma^h$ is at most
$\sum_i(k|\xi_i|_E+c)\leq \sum_i(k|\xi_i|_E+(n-1)kb)\leq
n\sum_i(k|\xi_i|_E)\leq nk\epsilon_1|\gamma|_E$, for some $n$
satisfies $c\leq (n-1)kb$.

Therefore, the length of the `bad' part of $\gamma^h$ is at most
the sum of the above three kinds, which is
$(2k+1+nk)\epsilon_1|\gamma|_E$. Hence the ratio of the `good'
part of $\gamma^h$ to the `bad' part of $\gamma^h$ is at least
$\frac{(1-2\epsilon_1)^2}{2k(1+2k+nk)\epsilon_1}$. It is easy to
see, for any constant $\epsilon$ there exists a small enough
$\epsilon_1$, such that the ratio of the `good' part of $\gamma^h$
to $\gamma^h$ is at least $(1-\epsilon)$.

To recap: for any $\epsilon>0$, we can choose small enough
$\epsilon_1$, so that
$\frac{(1-2\epsilon_1)^2}{2k(1+2k+nk)\epsilon_1}$ is greater than
$1-\epsilon$, therefore the 'good' part of $\gamma^h$ covers more
than $(1-\epsilon)$ of the total length of $\gamma^h$. Then choose
$\eta$ small enough so that if $\gamma^E\notin
slope_{\phi}^{\eta}$, then more than $(1-\epsilon)|\delta_i|$ part
of $\delta_i$ has distance less than $\epsilon_1$ with $\sigma_i$.
In addition, take $|\gamma|_E$ to be at least $L$, where
$L=max\{L_2,2L_1/\epsilon_1\}$. Hence if $\eta$ is small enough,
$\gamma\notin slope_{\phi}^{\eta}$ and $|\gamma|_E\geq L$, then
most part of $\gamma^h$ has distance at most $\epsilon$ to
$l_{\phi}^s$, measured in $PS$.

\end{proof}
Given a geodesic lamination $\Lambda$ and  $0<\epsilon<1$, let
$WN_{\epsilon}(\Lambda)$ denote the set of all the homotopy class
$\gamma$, so that on a subset of $\gamma^h$ of length at least
$(1-\epsilon)Length(\gamma^h)$, the distance from the tangent line
of $\gamma^h$ to the set $\Lambda$, measured in $PS$, is at most
$\epsilon$. Using this notation, the parallel corresponds lemma
says that for any $0<\epsilon<1$, there exists $0<\eta<1$ and
$L>0$, such that if $\gamma\notin slope_{\phi}^{\eta}$ and
$|\gamma|_E\geq L$, then $\gamma\in WN_{\epsilon}(\Lambda^s)$, where $\Lambda^s$ is the measured stable geodesic lamination of $\phi$.

\subsection {Proof of the main theorem}

\begin{proof}[Proof of Theorem 1] We shall prove that there exist $\lambda>1$ and $C>0$,
so that for any vertex $w\in \Gamma$, if a  based geodesic segment
$\gamma^h_w\subset F_w$ has length at least $C$, then all but at
most one preimages of it are stretched by corresponding
$\phi_i^{m_i}$ by a factor of at least $\lambda$, for any $i\in
I_w$, where $I_w=\{i|\ e_i \text{ is an oriented edge such that
the origin}$
 $\text{of } e_i \text{ is } w\}$. Hence the hallways flare
condition is satisfied. Therefore
$S\Gamma_{\boldsymbol{\varphi^m}}$ is a hyperbolic surface.

Let $v$ be a vertex of $\Gamma$, let $\gamma_v^h\subset F_v$ be a
based geodesic segment. Consider the set $\Sigma=\bigcup_{i\in
I_v} p^{-1}_i(\gamma^h_v)$, where $p^{-1}_i(\gamma^h_v)$ is the
set of all preimages of $\gamma^h_v$ under the map $p_i$. Notice
that all the elements of $\Sigma$ are based geodesics, since the
edge surfaces of $S\Gamma_{\boldsymbol{\varphi^m}}$ equipped with
the pullback metrics.

First, we claim that there exist $0<\epsilon_0<1$ and $H_0>0$,
such that if the length of $\gamma^h_v$ is greater than $H_0$,
then at most one of the elements of $\Sigma$,  say $\beta\in
p^{-1}_{i_0}(\gamma_v^h)$, such that $\beta\in
WN_{\epsilon_0}(\Lambda_{i_0}^s)$, for some $i_0\in I_{v}$; all
other elements of $\Sigma$ are not contained in
$WN_{\epsilon}(\Lambda_{i}^s)$ for corresponding $\Lambda^s_i$.
Second, according to Lemma 5, for this $\epsilon_0$, there exist
$0<\eta(\epsilon_0)<1$ and $L(\epsilon_0)>0$, such that  any
$\alpha\in \Sigma$ with length $|\alpha|=|\gamma_v^h|$ greater
than $L(\epsilon_0)$, if $\alpha\notin
WN_{\epsilon_0}(\Lambda^s_j)$, then $\alpha\in
slope_{\phi_j}^{\eta(\epsilon_0)}$. Therefore $\alpha$ is
stretched by $\phi_j^{m_j}$ by a factor of at least $\lambda$ for
sufficiently large $m_j$. Combining these, we know that for any
$\gamma_v^h$ with length greater $C=\max \{H, L(\epsilon_0)\}$,
all but at most one preimages of $\gamma_v^h$ are stretched by
corresponding $\phi_i^{m_i}$ by at least a factor $\lambda$.

Suppose the claim is not true. Namely for any
$\epsilon_n\rightarrow 0$, and any $H_n\rightarrow \infty$, there
exist based geodesic segments $\gamma_n^h\subset F_v$ with lengths
at least $H_n$, by passing to a subsequence, without loss of
generality,  suppose $A_n^h\in p_1^{-1}(\gamma^h_n)$ and $B_n^h\in
p_2^{-1}(\gamma^h_n)$, such that $A_n^h\in
WN_{\epsilon_n}(\Lambda^s_1)$ and $B_n^h\in
WN_{\epsilon_n}(\Lambda^s_2)$. Project $A_n^h$ and $B_n^h$ to $\Lambda^s_1$ and $\Lambda^s_2$ respectively, there exist long
subsegments $\nu_n\subset \Lambda_1^s$ and $\omega_n\subset
\Lambda^s_2$, such that $|\nu_n|$, $|\omega_n|\rightarrow \infty$,
and the distance between $Dp_1|T\nu_n$ and $Dp_2|T\omega_n$
converges to zero. This conflicts with the fact that
$Dp_1|T\Lambda^s_1$ and $Dp_2|T\Lambda^s_2$ are disjoint.

\end{proof}

\bigskip
\subsection {Reformulation of Theorem 1}

Notations here are the same as in the introduction. The only
difference is the edge surfaces are not necessary equipped
with the pullback metrics here.

Let $v$ be a vertex of $\Gamma$,  let $y$ be the base point of
$F_v$, and let $I_v$ be as defined before. Consider the set
$p_{i}^{-1}(y)\subset S_{i}$ of all the points of $S_i$ that cover
$y$ via the map $p_i$, for $i\in I_v$. Denote $X=\cup _{i\in
I_v}p^{-1}_i(y)$.

 Suppose $a\in p_i^{-1}(y)$, choose
a lift $\widetilde p_a:(\widetilde S_{i},\widetilde a)\rightarrow
(\widetilde F_v,\widetilde y)$, where $\widetilde S_{i}$ and $\widetilde F_v$ are the universal covers of $S_i$ and $F_v$ respectively. Let $\Lambda^s_i\subset S_{i}$ be
the stable lamination of $\phi_{i}$, and let $\widetilde \Lambda
^s_{i}\subset \widetilde S_{i}$ be the lift of $\Lambda^s_{i}$.
 Notice that $\partial \widetilde p_a(
\widetilde \Lambda^s_{i})\subset
\partial \widetilde F_v$ is
well defined independent of the choice of $\widetilde y,\widetilde
a$. If for any $a\neq b\in X$, $\partial \widetilde p_a(\widetilde
\Lambda_i^s)\cap \partial \widetilde p_b(\widetilde
\Lambda_j^s)=\varnothing$, where $a\in p_i^{-1}(y)$, $b\in p_j^{-1}(y)$, then we say $v$
satisfies the \emph{disjointness condition}. We only ask $a\neq b$, but $i$ may equal to $j$. The reformulation of
Theorem \ref{main} is the following.
\begin{theorem}
 Let $S\Gamma_{\boldsymbol{\varphi^m}}$ be a finite
graph of surfaces with underlying graph $\Gamma$. If for any
vertex $v\in \Gamma$, the disjointness condition is satisfied,
 then $\pi_1(S\Gamma_{\boldsymbol{\varphi^m}})$
is a hyperbolic group, when $m_i\in \boldsymbol{m}$ are
sufficiently large. \label{reform}
\end{theorem}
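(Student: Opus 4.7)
The plan is to reduce Theorem \ref{reform} to Theorem \ref{main} by translating the boundary disjointness condition into the projective-tangent-bundle condition of Theorem \ref{main}. First I would note that the condition is a topological invariant: the sets $\partial \widetilde{p}_a(\widetilde{\Lambda}^s_i) \subset \partial \widetilde{F}_v$ depend only on the isotopy class of the stable lamination $\Lambda^s_i$ and on the covering $p_i$, not on the particular hyperbolic metric chosen on $S_i$. Therefore I may replace the given metric on each edge surface $S_i$ by the pullback of the hyperbolic metric on $F_{o(e_i)}$ under $p_i$, placing us in the setting of Theorem \ref{main} without altering the disjointness hypothesis.

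Next I would show that the disjointness condition implies the hypothesis of Theorem \ref{main}: each $Dp_i|T\Lambda^s_i$ is an injection, and the images are pairwise disjoint compact subsets of $PF_v$. Compactness is automatic. For the rest, the key covering-space observation is that the preimage in $P\widetilde{F}_v$ of $Dp_i(T\Lambda^s_i)$ is precisely the $\pi_1(F_v)$-orbit of $D\widetilde{p}_{a_0}(T\widetilde{\Lambda}^s_i)$; since $|p_i^{-1}(y)| = [\pi_1(F_v) : (p_i)_*\pi_1(S_i)]$, the finitely many lifts $\{\widetilde{p}_a\}_{a \in p_i^{-1}(y)}$ realize a complete system of coset representatives, so every $\pi_1(F_v)$-translate of $\widetilde{p}_{a_0}$ coincides with some $\widetilde{p}_{a'}$ having $a' \in p_i^{-1}(y)$.

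Suppose then for contradiction that $Dp_i(u) = Dp_j(w)$ for a distinct pair $u \in T\Lambda^s_i$, $w \in T\Lambda^s_j$. Lifting to $\widetilde{u}, \widetilde{w}$ and fixing base lifts $\widetilde{p}_{a_0}, \widetilde{p}_{b_0}$, there is $g \in \pi_1(F_v)$ with $g \cdot D\widetilde{p}_{a_0}(\widetilde{u}) = D\widetilde{p}_{b_0}(\widetilde{w})$; by the previous step this rewrites as $D\widetilde{p}_{a'}(\widetilde{u}) = D\widetilde{p}_{b_0}(\widetilde{w})$ in $P\widetilde{F}_v$ for some $a' \in p_i^{-1}(y)$. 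If $(i,a') = (j,b_0)$, injectivity of the homeomorphism $\widetilde{p}_{a'}$ forces $\widetilde{u} = \widetilde{w}$ and hence $u = w$, contradicting distinctness. Otherwise $a'$ and $b_0$ are distinct elements of $X$, and the common tangent vector determines a unique leaf in each geodesic lamination $\widetilde{p}_{a'}(\widetilde{\Lambda}^s_i)$ and $\widetilde{p}_{b_0}(\widetilde{\Lambda}^s_j)$; these leaves coincide, yielding a common pair of endpoints in $\partial \widetilde{F}_v$ and contradicting the disjointness hypothesis. With both parts of the hypothesis of Theorem \ref{main} verified, the hyperbolicity of $\pi_1(S\Gamma_{\boldsymbol{\varphi^m}})$ follows.

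The main obstacle I expect is the deck-group bookkeeping in the second step --- carefully matching the monodromy action on $p_i^{-1}(y)$ with the coset space $\pi_1(F_v) / (p_i)_*\pi_1(S_i)$, and verifying that every relevant $\pi_1(F_v)$-translate of $\widetilde{p}_{a_0}$ is realized inside the finite family $\{\widetilde{p}_a\}_{a \in p_i^{-1}(y)}$. Once this is in hand, the translation from the boundary disjointness condition into the hypothesis of Theorem \ref{main} is essentially forced, and Theorem \ref{main} itself completes the argument.
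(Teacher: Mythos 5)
Your proposal follows essentially the same route as the paper: the paper proves Theorem \ref{reform} by showing the boundary disjointness condition is equivalent to the hypothesis of Theorem \ref{main} (injectivity of $Dp_i|T\Lambda^s_i$ and disjointness of the images in $PF_v$, with the pullback metrics in force) and then invoking Theorem \ref{main}. Your extra steps --- the metric-independence remark, the identification of the $\pi_1(F_v)$-translates of $\widetilde p_{a_0}$ with the lifts $\widetilde p_{a'}$, $a'\in p_i^{-1}(y)$, and the observation that a common projective tangent vector forces the image leaves to coincide and hence share endpoints on $\partial\widetilde F_v$ --- simply supply details the paper leaves implicit, so the argument is correct.
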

\bigskip

We shall show the equivalence of the hypothesis of Theorem 1 and
Theorem 6.

First, suppose $Dp_i(T\Lambda^s_i)$ is disjoint from
$Dp_j(T\Lambda^s_j)$, for $i\neq j$. Then the images of the leaves $\Lambda^s_i$
under the map $p_i$ must transversely intersect the images of the
leaves $\Lambda^s_j$ under the map $p_j$. Thus the end points of
their lifts in $\widetilde F$ are disjoint.

Second, suppose $Dp_i(T\Lambda^s_i)$ is injection for all $i$. If
$\partial \widetilde p_{a_1}(\widetilde \Lambda_i^s)\cap
\partial \widetilde p_{a_2}(\widetilde \Lambda_i^s)\neq \varnothing$, for some
$a_1, a_2\in p^{-1}_i(y)$,  then there exist leaves $\widetilde
L_1, \widetilde L_2\subset \widetilde \Lambda^s_i$, such that
$\widetilde p_{a_1}(\widetilde L_1)=\widetilde p_{a_2}(\widetilde
L_2)$. It contradicts with the injectiveness of
$Dp_i(T\Lambda_i^s)$. We have finished the proof of one direction.

Suppose $Dp_i(T\Lambda_i^s)$ is not disjoint with
$Dp_j(T\Lambda_j^s)$, i.e., there exist leaves $L\subset
\Lambda_i^s$ and $J\subset \Lambda_j^s$, such that
$Dp_i(L)=Dp_j(J)$. Therefore there exist a lift $\widetilde L$  of
$L$, a lift $\widetilde J$  of $J$, such that $\widetilde
p_a(\widetilde L)=\widetilde p_b(\widetilde J)$ for some $a\in
p^{-1}_i(y)$ and some $b\in p^{-1}_j(y)$. It conflicts with the
hypothesis of Theorem 6. Similar proof for the injections of
$Dp_i(T\Lambda_i^s)$ for all $i$.

\bigskip

\section{Applications}

The theorem below will be used to prove Corollary \ref{finite
group}.
\begin{theorem}
(Farb \& Mosher \cite{FarbMosher:quasiconvex},Theorem 1.2) Let
$\pi_1(S)$ be the fundamental group of a surface $S$, and let
$\Gamma_{\alpha}$ be the surface group extension of a group $G$.
If $\Gamma_{\alpha}$ is word hyperbolic then the homomorphism
$\alpha: G \rightarrow MCG$ has finite kernel and convex cocompact
image. \label{con cocom sub}
\end{theorem}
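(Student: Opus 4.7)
My plan is to deduce both conclusions from the single topological principle that a word hyperbolic group contains no $\mathbb{Z}^2$, combined with a characterization of convex cocompact subgroups of $MCG(S)$ (hyperbolic plus purely pseudo-Anosov plus orbit map into Teichm\"uller space a quasi-isometric embedding). I work with the pushout description $\Gamma_\alpha=\{(\phi,\gamma)\in MCG(S,x)\times G\mid q(\phi)=\alpha(\gamma)\}$ stated in the preliminaries, so that the natural map $\Gamma_\alpha\to G$ has kernel isomorphic to $\pi_1(S,x)$.

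First I would show $\ker\alpha$ is finite. Suppose for contradiction some $g\in\ker\alpha$ has infinite order. Pulling $\langle g\rangle\subset G$ back along $\Gamma_\alpha\to G$ produces a subgroup $H\subset\Gamma_\alpha$ with $1\to\pi_1(S,x)\to H\to\langle g\rangle\to 1$. Because $g$ lies in $\ker\alpha$, the outer action of $\langle g\rangle$ on $\pi_1(S,x)$ induced by this extension factors through $q(\ker\alpha)=1$, i.e.\ is trivial in $\text{Out}(\pi_1(S))$. Hence $H$ is a central extension of $\mathbb{Z}$ by $\pi_1(S)$, and a finite-index subgroup is $\pi_1(S)\times\mathbb{Z}$; picking any non-trivial element of $\pi_1(S)$ together with a power of $g$ produces a $\mathbb{Z}^2$ in $\Gamma_\alpha$, contradicting its hyperbolicity.

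Next I would show that for every $g\in G$ with $\alpha(g)$ of infinite order, $\alpha(g)$ must be pseudo-Anosov. If instead $\alpha(g)$ is reducible (the periodic case is absorbed into the finite-kernel step above after passing to a power), some power of $\alpha(g)$ fixes the free homotopy class of an essential simple closed curve $c\subset S$. Then the conjugacy class of $[c]\in\pi_1(S,x)$ is preserved by the corresponding power of $g$ acting through the pushout, so after adjusting by an inner automorphism we find an element of $\pi_1(S,x)$ represented by a power of $c$ and a lift of a power of $g$ which commute in $\Gamma_\alpha$. Together they span a $\mathbb{Z}^2$, again contradicting hyperbolicity. Thus $\alpha(G)$ is purely pseudo-Anosov.

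The main obstacle is upgrading ``purely pseudo-Anosov plus word hyperbolic'' to convex cocompactness, since this step is not purely algebraic; it is the content of the equivalence proved by Farb and Mosher. The plan is to use hyperbolicity of $\Gamma_\alpha$ to control the geometry of orbits of $\alpha(G)$ in Teichm\"uller space $\mathrm{Teich}(S)$: choose a $\Gamma_\alpha$-equivariant map from the Cayley graph of $\Gamma_\alpha$ to the universal curve over $\mathrm{Teich}(S)$, observe that the fiber direction is $\pi_1(S)$-equivariantly quasi-isometric to $\widetilde{S}$, and argue that a hyperbolic total space whose fibers are already hyperbolic and quasi-isometrically embedded forces the base orbit map $G\to\mathrm{Teich}(S)$ to be a quasi-isometric embedding (this is the converse direction of the Bestvina--Feighn philosophy used in the rest of the paper). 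With the quasi-isometric orbit map in hand, the limit set of $\alpha(G)$ in the Thurston boundary consists entirely of uniquely ergodic filling laminations, and cocompactness of $\alpha(G)$ on its weak hull follows from the standard argument for Schottky-type subgroups of $MCG(S)$. Combining all three items yields finite kernel and convex cocompact image.
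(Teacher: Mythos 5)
This statement is not proved in the paper at all: it is quoted verbatim from Farb--Mosher (Theorem 1.2 of \cite{FarbMosher:quasiconvex}) and used as a black box in the proof of Corollary \ref{finite group}, so the only fair comparison is with Farb--Mosher's own argument. Your first two steps are fine and standard: the preimage of $\ker\alpha$ in $\Gamma_\alpha$ is $i(\pi_1(S,x))\times\ker\alpha$, so an infinite kernel would put either a $\mathbb{Z}^2$ or an infinite torsion subgroup inside a word hyperbolic group; and an infinite-order reducible image class would produce a commuting pair (a lift of a power of $g$ together with the class of the invariant curve), again a $\mathbb{Z}^2$. These observations do appear, in essentially this form, in the literature on hyperbolic surface group extensions.

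The genuine gap is your third step, which is the actual content of the theorem. You invoke a ``characterization'' of convex cocompactness (hyperbolic, purely pseudo-Anosov, orbit map a quasi-isometric embedding into Teichm\"uller space) and then claim hyperbolicity of $\Gamma_\alpha$ forces the orbit map of $\alpha(G)$ to be a quasi-isometric embedding because ``a hyperbolic total space whose fibers are already hyperbolic and quasi-isometrically embedded forces the base orbit map to be a quasi-isometric embedding.'' Two problems: first, the fiber $\pi_1(S,x)$ is an infinite normal subgroup of infinite index in $\Gamma_\alpha$, hence never quasiconvex and in fact exponentially distorted when $\Gamma_\alpha$ is hyperbolic, so the hypothesis of your asserted principle is false and the principle itself is not available; second, deducing a quasi-isometric (indeed quasiconvex) orbit in Teichm\"uller space from hyperbolicity of the extension is precisely what Farb--Mosher prove, using the converse of the Bestvina--Feighn combination theorem (hyperbolicity implies the flaring condition) together with a careful analysis of flaring for the canonical hyperbolic-plane bundle over Teichm\"uller space; note also that Teichm\"uller space is not hyperbolic, so a quasi-isometrically embedded orbit is not automatically quasiconvex, and ``purely pseudo-Anosov plus word hyperbolic implies convex cocompact'' is not a theorem you may cite --- it is an open question in general. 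Your final appeal to ``the standard argument for Schottky-type subgroups'' likewise applies to free groups generated by high powers of independent pseudo-Anosovs, not to an arbitrary $G$. So the proposal establishes the finite-kernel clause but assumes, rather than proves, the convex cocompactness clause.
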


\bigskip

\begin{corollary}
Let $G,\ H$ be finite subgroups of $MCG(S)$, and let $\Phi\in
MCG(S)$ be a pseudo-Anosov mapping class. If the virtual
centralizer of $\langle\Phi\rangle$ has trivial intersection with
$G$ and $H$, then  $\langle G, \Phi^M H \Phi^{-M}\rangle$ is a
free product in $MCG(S)$, i.e., $\langle G, \Phi^M H
\Phi^{-M}\rangle\cong G* \Phi^MH\Phi^{-M}$, and its extension
group is hyperbolic, for sufficiently large $M$.\label{finite
group}
\end{corollary}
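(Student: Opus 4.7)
The plan is to realize the extension group of $\langle G,\Phi^M H\Phi^{-M}\rangle$ as the fundamental group of a two-vertex graph of surfaces with pseudo-Anosov regluing, verify the disjointness condition of Theorem~\ref{reform}, and then apply Theorem~\ref{con cocom sub} (Farb--Mosher) to upgrade the resulting hyperbolicity into the statement that $\langle G,\Phi^M H\Phi^{-M}\rangle$ is a free product.

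First, by Nielsen realization I would fix hyperbolic metrics on $S$ in which $G$ and $H$ respectively act by isometries, giving closed hyperbolic orbifolds $S/G$ and $S/H$. I would then form the graph $\Gamma$ with two vertices $v_1,v_2$ joined by a single edge pair $\{e,\overline e\}$, set $F_{v_1}=S/G$, $F_{v_2}=S/H$, $S_e=S_{\overline e}=S$, let $p_e,p_{\overline e}$ be the two quotient maps, take $g_e=\mathrm{id}$, choose a pseudo-Anosov representative $\phi$ of $\Phi$, and reglue by $\phi^M$ to produce $\mathcal G_{\phi^M}$.

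Next I would verify the disjointness condition at each vertex. At $v_1$, distinct points of $p_e^{-1}(y)$ differ by some $g\in G\setminus\{1\}$, and distinct lifts to the universal covers differ by a lift $\tilde g$ of $g$, so the condition reduces to $\tilde g\cdot\partial\widetilde\Lambda^s\cap\partial\widetilde\Lambda^s=\varnothing$ for every such $g$, where $\Lambda^s$ is the stable lamination of $\phi$. Because $\Lambda^s$ is minimal and filling, Fact~3 identifies this disjointness with the inequality $g\cdot\Lambda^s\neq\Lambda^s$ of geodesic laminations on $S$. The identification $VC\langle\Phi\rangle=\mathrm{Fix}\{\Lambda^s,\Lambda^u\}$ recalled in Section~2, together with the standard fact that $\mathrm{Stab}(\Lambda^s)=\mathrm{Stab}(\Lambda^u)=VC\langle\Phi\rangle$ (for $g\in\mathrm{Stab}(\Lambda^s)$, the conjugate $g^{-1}\Phi g$ is pseudo-Anosov sharing a fixed point in $\mathcal{PML}$ with $\Phi$, forcing $g^{-1}\Phi g=\Phi^{\pm 1}$), reduces the required condition to $G\cap VC\langle\Phi\rangle=\{1\}$, which is the hypothesis. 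A symmetric argument at $v_2$ uses $\Lambda^u$ (the stable lamination on the $\overline e$-side) and $H\cap VC\langle\Phi\rangle=\{1\}$. Theorem~\ref{reform} then gives that $\pi_1(\mathcal G_{\phi^M})$ is word hyperbolic for all sufficiently large $M$.

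Finally, Bass--Serre theory identifies $\pi_1(\mathcal G_{\phi^M})$ with $\pi_1^{\mathrm{orb}}(S/G)*_{\pi_1(S)}\pi_1^{\mathrm{orb}}(S/H)$, the amalgamation on the $H$-side being twisted by $\Phi^M_\ast$; quotienting by the normal subgroup $\pi_1(S)$ gives $G*H$, and the induced $\alpha:G*H\to MCG(S)$ sends $G$ identically to $G$ and $H$ to $\Phi^M H\Phi^{-M}$, so $\pi_1(\mathcal G_{\phi^M})$ is precisely the pushout extension group $\Gamma_\alpha$ of $\alpha(G*H)=\langle G,\Phi^M H\Phi^{-M}\rangle$. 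Its hyperbolicity together with Theorem~\ref{con cocom sub} forces $\alpha$ to have finite kernel; but $G*H$ (with $G,H$ finite and nontrivial, the trivial cases being immediate) has no nontrivial finite normal subgroup, since any finite subgroup is conjugate into $G$ or $H$ and normality then squeezes it into the trivial intersection of all vertex stabilizers of the Bass--Serre tree of the free product. Hence $\alpha$ is injective and $\langle G,\Phi^M H\Phi^{-M}\rangle\cong G*\Phi^M H\Phi^{-M}$. The principal obstacle is the reduction in Step~2 from $G\cap VC\langle\Phi\rangle=\{1\}$ to the geometric inequality $g\cdot\Lambda^s\neq\Lambda^s$: one must verify that the stabilizer of a single pseudo-Anosov lamination already equals the virtual centralizer, after which Fact~3 delivers the disjointness for free.
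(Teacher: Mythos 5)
Your proposal is correct and follows essentially the same route as the paper: it realizes the extension group as the fundamental group of the two-vertex graph of surfaces $F_0\xleftarrow{p}S\xrightarrow{\phi^M}S\xrightarrow{q}F_1$, verifies the disjointness condition of Theorem~\ref{reform} by reducing, via Fact~3 and the identification of the stabilizer of $\Lambda^s$ (resp.\ $\Lambda^u$) with $VC\langle\Phi\rangle$, to the hypothesis that $G$ and $H$ meet $VC\langle\Phi\rangle$ trivially, and then uses Theorem~\ref{con cocom sub} together with the absence of nontrivial finite normal subgroups in $G*H$ to conclude injectivity and the free-product structure. The only cosmetic differences are that you prove the stabilizer/virtual-centralizer identification and the triviality of finite normal subgroups directly, where the paper cites Mosher's survey and Scott--Wall.
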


Remark: if $G$ is a finite subgroup of $MCG(S)$, then $G$ has a
faithful representation, still called $G\subset Homeo(S)$. The
quotient $S/G$, called $F_0$, is a hyperbolic surface or orbifold.
There exists a canonical embedding $i:\mathcal
{PML}(F_0)\hookrightarrow \mathcal {PML}(S)$, where $\mathcal
{PML}$ is the projective measured geodesic laminations space.
Given a pseudo-Anosov mapping class $\Phi\in MCG(S)$, if the
stable and unstable geodesic laminations
$\Lambda^s,\Lambda^u\notin i(\mathcal {PML}(F_0))$, then the
virtual centralizer of $\langle\Phi\rangle$ has trivial
intersection with $G$. Therefore, it is very easy to find
$\Phi\subset MCG(S)$ which satisfies the hypothesis of this
corollary.

\begin{proof} Let the symbols $G, \ H$ denote both the finite groups of $MCG(S)$ and their faithful
representations in $Homeo(S)$. Let $F_0=S/G, \ F_1=S/H$. Let $p:
S\rightarrow F_0,\ q:S\rightarrow F_1$ denote the corresponding
covering maps, and let $p_{*}:\pi_1(S)\rightarrow \pi_1(F_0)$,
$q_{*}:\pi_1(S)\rightarrow \pi_1(F_1)$ denote the induced maps on
fundamental groups.

Let $G\Gamma$ be the graph of groups:
\begin{equation*}
\begin{CD}
\pi_1(F_0)@<p_*<<\pi_1(S)@>\Phi^M>>\pi_1(S)@>q_*>>\pi_1(F_1)
\end{CD}
\end{equation*}

$\pi_1(G\Gamma)$ is the fundamental group of the graph of surfaces $S\Gamma$:

\begin{equation*}
\begin{CD}
F_0@<p<<S@>\phi^M>>S@>q>>F_1
\end{CD}
\end{equation*}
where $\phi\in Homeo(S)$ is a pseudo-Anosov representative
homeomorphism of $\Phi$.

There exists a short exact sequence

\begin{equation*}
\begin{CD}
1\rightarrow \pi_1(S,x)\rightarrow
\Gamma_{G*\Phi^MH\Phi^{-M}}\rightarrow
G*\Phi^MH\Phi^{-M}\rightarrow 1
\end{CD}
\end{equation*}

It is not hard to see that $\Gamma_{G*\Phi^MH\Phi^{-M}}$ is
isomorphic to $\Gamma_G*_{\pi_1(S)} \Gamma_{\Phi^MH\Phi^{-M}}$,
and $\Gamma_G*_{\pi_1(S)} \Gamma_{\Phi^MH\Phi^{-M}}$ is isomorphic
to $\pi_1(G\Gamma)$.

 According to Theorem 7, if $\pi_1(G\Gamma)$ is a word hyperbolic group, then
 $\delta:G*\Phi^MH\Phi^{-M}\rightarrow MCG(S)$ has finite
kernel. Since $G$ and $\Phi^MH\Phi^{-M}$ are finite groups, by
applying Theorem 3.11 of Scott and Wall \cite{ScottWall}, a normal
subgroup of $G*\Phi^MH\Phi^{-M}$ must be trivial or finite index.
Therefore $\delta$ is an injection, which tells us that
$\langle G,\Phi^MH\Phi^M \rangle\cong G*\Phi^MH\Phi^{-M}$.

In order to prove $\pi_1(G\Gamma)$ is word hyperbolic, we only
need to show that $S\Gamma$ is a hyperbolic graph of surfaces.

Let $y\in F_0$ be the base point, let
$\{x_1,\cdots,x_r\}=p^{-1}(y)$ denote the preimages of $y$ under
the covering map $p$, and let $\widetilde x_i\in \widetilde S $ be
a covering point of $x_i$ for $i\in \{1,\cdots,r\}$. Let
$\widetilde p_i: (\widetilde S, \widetilde x_i)\rightarrow
(\widetilde F_0,\widetilde y)$ be a lift of $p$, let
$D_{ik}:(S,x_i)\rightarrow (S,x_k)$ be a deck transformation of
covering map $p$, and let $\widetilde D_{ik}:(\widetilde
S,\widetilde x_i)\rightarrow (\widetilde S,\widetilde x_k)$ be a
lift of $D_{ik}$.

According to Theorem \ref{reform}, if $\partial \widetilde
p_i(\widetilde \Lambda^s)\subset \partial\widetilde F_0$ are
pairwise disjoint on $\partial \widetilde F_0$, and the similar
condition holds on $\partial \widetilde F_1$, then $S\Gamma$ is
hyperbolic.

In the following, we only prove that $\partial \widetilde
p_1(\widetilde \Lambda^s)$ and $\partial \widetilde p_2(\widetilde
\Lambda^s)$ are disjoint; a similar argument holds for the
pairwise disjointness of $\{\partial\widetilde p_i(\widetilde
\Lambda^s)\}$ for all $i\in \{1,\cdots,r\}$, and the pairwise
disjointness of $\{\partial\widetilde q_j(\widetilde \Lambda^u)\}$
for all $j$.

Since $\widetilde p_1=\widetilde p_2\widetilde D_{12}$,
$\widetilde p_1(\widetilde \Lambda^s)=\widetilde p_2\widetilde
D_{12}(\widetilde \Lambda^s)$. Hence if the boundary points of the
images of $\widetilde \Lambda^s$ under $\widetilde p_1$ and
$\widetilde p_2$ have one point in common, then $\widetilde
D_{12}(\widetilde \Lambda^s)$ and $\widetilde \Lambda^s$ have one
end point in common. Since $\widetilde D_{12}(\widetilde
\Lambda^s)$ and $\widetilde \Lambda^s$ are the lift of the geodesic laminations
$D_{12}(\Lambda^s)$ and $\Lambda^s$ respectively, by Fact 3, we know
$D_{12}(\Lambda^s)=\Lambda^s$, where $D_{12}$ considered as an
element of $G\subset MCG(S)$. Applying Theorem 3.5 in
\cite{Mosher:Geosurvmcg}, if $D_{12}(\Lambda^s)=\Lambda^s$, then
$D_{12}$ is contained in the virtual centralizer of
$\langle\Phi\rangle$. This contradicts with the hypothesis that
the virtual centralizer of $\langle\Phi\rangle$ has trivial
intersection with $G$.
\end{proof}

\bigskip

Let $\mathcal G_{\phi^m}$ as in Figure 3, where $S$, $F$ are genus
3 and 2 tori. Let $p: S\rightarrow F$ and $q: S\rightarrow F$ be
covering maps, and let $\phi$ be a pseudo-Anosov homeomorphism of the
mapping class $\Phi$. Abusing of notations, we use $D_p$, $D_q$
for both the deck transformations of $p$, $q$ and the mapping classes of the
deck transformations. It is easy to see that the
deck transformation group $GD_p$ of $p$ contains only two
elements, $D_p$ and the identity, the same is true for the deck
transformation group of $q$. Abusing of notations, we let $GD_p$
denote both the deck transformation group of $p$ and its image in $MCG(S)$.

\begin{corollary}
Suppose $a:S^1\rightarrow F$ and $c:S^1\rightarrow S$ are simple
closed curves such that $p^{-1}(a(S^1))=c(S^1),\ c(S^1)\subset
q^{-1}(a(S^1))$, and $q^{-1}(a(S^1))$ is disconnected, as in
Figure 4. In addition, suppose the virtual centralizer of $\langle
\Phi \rangle$ has trivial intersection with the images of the deck
transformation groups of $p$ and $q$ in $MCG(S)$. Then
$\pi_1(\mathcal G_{\phi^m})$ is a hyperbolic group, when $m$ is
sufficiently large. \label {hnn}
\end{corollary}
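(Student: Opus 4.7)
The plan is to apply Theorem \ref{reform}: the underlying graph of $\mathcal G_{\phi^m}$ has a single vertex $F$ and a single loop-edge with edge surface $S$, so it suffices to verify the disjointness condition at the vertex $F$. Taking the gluing $g_1=\mathrm{id}$, the stable lamination attached to the oriented edge is $\Lambda^s_1=\Lambda^s$ (stable of $\phi$) and the one attached to the reverse orientation is $\Lambda^s_{\bar 1}=\Lambda^u$ (unstable of $\phi$). The set $X$ is the disjoint union $p^{-1}(y)\sqcup q^{-1}(y)$, so the disjointness condition splits into three cases according to which cover the two points lie over.

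The intra-cover cases proceed exactly as in Corollary \ref{finite group}. If two distinct $x,x'\in p^{-1}(y)$ had $\partial\widetilde p_x(\widetilde\Lambda^s)\cap\partial\widetilde p_{x'}(\widetilde\Lambda^s)\neq\varnothing$, then (noting that $p$ is a degree-$2$, hence regular, cover of $F$) Fact 3 would give $D(\Lambda^s)=\Lambda^s$ for the nontrivial $D\in GD_p$, and Theorem 3.5 of \cite{Mosher:Geosurvmcg} would place $D$ in $VC\langle\Phi\rangle$, contradicting the hypothesis that $VC\langle\Phi\rangle\cap GD_p$ is trivial. The symmetric argument, using $\Lambda^u$ and $GD_q$ in place of $\Lambda^s$ and $GD_p$, covers the case of two points in $q^{-1}(y)$.

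The essentially new case is the cross case: $x\in p^{-1}(y)$, $x'\in q^{-1}(y)$. Identify $\widetilde F=\mathbb{H}^2$ and view $L_p:=\widetilde p_x(\widetilde\Lambda^s)$ and $L_q:=\widetilde q_{x'}(\widetilde\Lambda^u)$ as geodesic laminations in $\mathbb{H}^2$, invariant respectively under the index-$2$, hence normal, subgroups $P=p_*\pi_1(S)$ and $Q=q_*\pi_1(S)$ of $\pi_1(F)$. Suppose $\partial L_p\cap\partial L_q\neq\varnothing$. The argument underlying Fact 3, adapted to minimal filling geodesic laminations in $\mathbb{H}^2$ arising from cocompact Fuchsian actions, forces $L_p=L_q$. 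The $\pi_1(F)$-stabilizer of this common lamination then contains $\langle P,Q\rangle$; if the stabilizer were all of $\pi_1(F)$, the nontrivial deck transformation of $p$ would preserve $\Lambda^s$, contradicting the intra-cover analysis. Hence the stabilizer has index $2$ in $\pi_1(F)$, which forces $P=Q$.

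Finally, $P=Q$ means the based covers $p$ and $q$ are equivalent, so there exists a self-homeomorphism $h:S\to S$ with $p=q\circ h$. Then $c=p^{-1}(a)=h^{-1}(q^{-1}(a))$, so $c$ and $q^{-1}(a)$ are homeomorphic subsets of $S$; but $c$ is connected while $q^{-1}(a)$ is disconnected by hypothesis, a contradiction. Thus the disjointness condition holds at $F$, and Theorem \ref{reform} delivers hyperbolicity of $\pi_1(\mathcal G_{\phi^m})$ for sufficiently large $m$. I expect the main obstacle to be the cross case: both extending Fact 3 to pairs of geodesic laminations in $\mathbb{H}^2$ that are invariant under different (though commensurable) Fuchsian subgroups of $\pi_1(F)$, and converting the algebraic equality $P=Q$ into the topological contradiction $c\not\cong q^{-1}(a)$ via the specific hypothesis on the curves $a$ and $c$.
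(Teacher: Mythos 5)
Your reduction to Theorem \ref{reform} and your handling of the two intra-cover cases coincide with the paper's proof. In the cross case, however, the step you yourself flag as the main obstacle --- that a common endpoint of $\partial\widetilde p_x(\widetilde\Lambda^s)$ and $\partial\widetilde q_{x'}(\widetilde\Lambda^u)$ forces the two laminations of $\mathbb{H}^2$ to coincide --- is left unproved, and this is exactly where the paper does its real work: it descends to the common cover $G$ of genus $5$ corresponding to $P\cap Q$ (this is the concrete form of your ``adaptation of Fact 3 to commensurable Fuchsian groups''), and before Fact 3 can be applied there it must check that the lifted laminations $\mathcal{L}^s,\mathcal{L}^u$ are filling and, less trivially, minimal (the paper argues connectedness and absence of isolated leaves and cites Corollary 4.7.2 of \cite{BleilerCasson}); minimality of a preimage lamination under a finite cover is not automatic, so as written your proposal has a gap at precisely this point, although the plan you sketch for closing it is the right one.

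Granting that equality, your endgame is genuinely different from the paper's and is a substantial shortcut. You observe that the common lamination is invariant under $\langle P,Q\rangle$ with $P=p_*\pi_1(S)$, $Q=q_*\pi_1(S)$; since the curve hypothesis gives $[a]\in Q\setminus P$ (equivalently, it rules out your ``$P=Q$'' horn), the lamination is invariant under all of $\pi_1(F)$, so the nontrivial deck transformation $D_p$ fixes $\Lambda^s$ and Theorem 3.5 of \cite{Mosher:Geosurvmcg} contradicts the virtual-centralizer hypothesis exactly as in the intra-cover case; this argument appears correct. The paper instead extracts from $\mathcal{L}^s=\mathcal{L}^u$ lifted pseudo-Anosov maps $\zeta,\sigma$ of $G$ with $\zeta^{k_1}$ homotopic to $\sigma^{k_2}$, uses the curves $c$ and $c^2$ and the degree-two covers to show that $q\phi^{-nk_2m}(c)$ remains simple in $F$ for all $n$, and passes to a limit to conclude that $q(\Lambda^u)$ is a geodesic lamination on $F$, reaching the same contradiction. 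Thus your route uses the hypothesis on $a$ and $c$ only to see $P\neq Q$, replacing the paper's iterated simple-curve and limit argument by a short stabilizer computation, at the price of having to supply in full the minimality/filling verification on the common cover that the paper carries out.
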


\begin{proof} Let $z$ be the base point of $F$, let $x_1,x_2$ be the covering
points of $z$ through the covering map $p$, and let $y_1,y_2$ be
the covering points of $z$ through the covering map $q$ . Let
$\widetilde p_1:(\widetilde S,\widetilde x_1)\rightarrow
(\widetilde F,\widetilde z)$ and $\widetilde p_2:(\widetilde
S,\widetilde x_2)\rightarrow (\widetilde F,\widetilde z)$ be the
lifts of $p$, and let $\widetilde D_p:(\widetilde S,\widetilde
x_1)\rightarrow (\widetilde S,\widetilde x_2) $ be the lift of
$D_p$. Similar notations hold for $q$.

According to Theorem \ref{reform}, we only need to show that
$\{\partial \widetilde p_1(\widetilde \Lambda^s),\partial
\widetilde p_2(\widetilde \Lambda^s),\partial \widetilde
q_1(\widetilde \Lambda^u),\ \partial \widetilde q_2(\widetilde
\Lambda^u)\}$ is a pairwise disjoint set.

First, we shall prove that $\partial \widetilde p_1(\widetilde
\Lambda^s)\cap\partial \widetilde p_2(\widetilde
\Lambda^s)=\varnothing$, $\partial \widetilde q_1(\widetilde
\Lambda^u)\cap\partial \widetilde q_2(\widetilde
\Lambda^u)=\varnothing$.

We know that $\widetilde p_1(\widetilde \Lambda^s)=\widetilde
p_2\widetilde D_p(\widetilde \Lambda^s)$. If $\partial \widetilde
p_1(\widetilde \Lambda^s)$ and $\partial \widetilde p_2(\widetilde
\Lambda^s)$ are not disjoint, then $\widetilde
\Lambda^s=\widetilde D_p(\widetilde \Lambda^s)$, as discussed in
Corollary 8. It conflicts with the hypothesis that the virtual
centralizer of $\langle\Phi\rangle$ has trivial intersection with
$GD_p$ and $GD_q$.

Therefore $\partial \widetilde p_1(\widetilde \Lambda^s)$ and
$\partial \widetilde p_2(\widetilde \Lambda^s)$ are disjoint, the
same holds for $\partial \widetilde q_1(\widetilde \Lambda^u)$
and $\partial \widetilde q_2(\widetilde \Lambda^u)$.

Second, we claim that if there exist
$\partial \widetilde p_r(\widetilde\Lambda ^s)$ and $\partial
\widetilde q_t(\widetilde\Lambda ^u)$ are not disjoint, for some $r,t\in\{1,\ 2\}$,then
$p(\Lambda^s)=q(\Lambda^u)$ is a geodesic lamination on $F$. It
follows that $\Lambda^s$ is a fixed point of $GD_p\subset MCG(S)$.
Therefore the virtual centralizer of $\langle\Phi\rangle$ and the
deck transformation group have non-trivial intersection. A
contradiction.

In the following, we will prove the above claim.

Since $p_\ast(\pi_1(S))\neq q_\ast(\pi_1(S))$, and they are both
index two subgroups of $\pi_1(F)$,  $p_\ast(\pi_1(S))\cap
q_\ast(\pi_1(S))$ is an index 4 subgroup of $\pi_1(S)$. By
calculating the Euler characteristic, we know there is a genus
5 surface $G$, and covering maps $i$ and $j$, such that the
diagram below commutes, i.e,\begin{equation} pi=qj
\end{equation}

\begin{picture}(300,100)
\put(55,80){$G$} \put(55,75){\vector(-1,-1){30}}
\put(65,75){\vector(1,-1){30}} \put(35,65){i} \put(85,65){j}
\put(15,35){$S$} \put(95,35){$S$} \put(25,30){\vector(1,-1){30}}
\put(95,30){\vector(-1,-1){30}} \put(35,10){p} \put(85,10){q}
\put(55,-5){$F$} \put(130,35)\
\end{picture}

After straightening, the preimages of $i^{-1}(\Lambda^s)$ and
$j^{-1}(\Lambda^u)$ are geodesic laminations, called $\mathcal
L^s$ and $\mathcal L^u$, on $G$.

Without loss of generality, suppose $\widetilde p_1(\widetilde
\Lambda^s)$ and $\widetilde q_1(\widetilde \Lambda^u)$ have one
end point in common, then $\widetilde p_1\widetilde i(\widetilde
{\mathcal L}^s)$ and $\widetilde q_1\widetilde j(\widetilde
{\mathcal L}^u)$ have one end point in common. It follows that
$\widetilde {\mathcal {L}}^s$ and $\widetilde {\mathcal {L}}^u$
have one common end point. We claim that $\mathcal L^s$ and
$\mathcal L^u$ are minimal geodesic laminations and fill the
surface $G$. Therefore if they have one common end point in the
universal cover of $G$, then $\mathcal L^s=\mathcal L^u$. It is
not hard to see that $\mathcal L^s$ is connected and without
isolated leaves, thereby $\mathcal L^s$ is minimal according to
Corollary 4.7.2 in \cite{BleilerCasson}. $\mathcal L^s$ and
$\mathcal L^u$ fill $G$ because they are lifts of filling
laminations $\Lambda^s$ and $\Lambda^u$.

There exists some $m$, such that $\phi^m: S\rightarrow S$ is
lifted by $i$ and $j$ to homeomorphisms of $G$ respectively.
Denote the lift of $\phi^m:S\rightarrow S$ through $i$ as $\zeta:
G\rightarrow G$, and the lift of $\phi^{-m}$ through $j$ as
$\sigma: G\rightarrow G$. Notice that $\mathcal L^s$ is the stable
geodesic lamination of $\zeta$, $\mathcal L^u$ is the stable
geodesic lamination of $\sigma$. Since $\mathcal L^s=\mathcal
L^u$, there exist positive integers $k_1,\ k_2$, such that
$\zeta^{k_1}$ is homotopic to $\sigma^{k_2}$.

Since $\zeta^{k_1}$ is homotopic to $\sigma^{k_2}$ and $pi=qj$, we know: $pi\zeta^{k_1}$ is homotopic to
$qj\sigma^{k_2}$. $p\phi^{k_1m}i$ is homotopic to $q\phi^{-k_2m}j$,
because $\phi^{k_1m}i=i\zeta^{k_1}$ and
$\phi^{-k_2m}j=j\sigma^{k_1}$.

$p(c)$ is a notation for the closed curve
$p(c):S^{1}\rightarrow F$ which is the composition of
$c:S^1\rightarrow S$ with the covering map $p:S\rightarrow F$.
Similar notations are used for other compositions of closed curves
with covering maps. ${c^2}:{S^1}\rightarrow S$ is defined to be
the composition of the map $z\rightarrow z^2$ on the unit circle
$S^1$ with map $c:S^1\rightarrow S$. Let $[a]$, $[c]$ denote the
conjugacy classes in the fundamental group of $F$ which
represented by the simple closed curve $a$, $c$.

 Since $p(c)$ is
homotopic to $a^2$ and $q(c)$ is homotopic to $a$, it tells us
that $[a]\notin p_{\ast}(\pi_1(S))$, $[a]\in q_{\ast}(\pi_1(S))$,
and $[a]^2\in p_\ast(\pi_1(S))\cap q_\ast(\pi_1(S))$. Hence there
exists $\gamma:S^1\rightarrow G$ which is homotopic to a simple
closed curve, such that $i(\gamma)$ is homotopic to $c$ and
$j(\gamma)$ is homotopic to $c^2$. Therefore $p\phi^{k_1m}(c)$,
$p\phi^{k_1m}i(\gamma)$, $q\phi^{-k_2m}j(\gamma)$ and
$q\phi^{-k_2m}(c^2)$ are homotopic to each other.

 We claim that $q\phi^{-k_2m}(c)$ is homotopic to a simple closed curve on $F$.
Let $\beta$ be the closed geodesic on $F$ which is homotopic to
$q\phi^{-k_2m}(c)$. If $\beta$ is not simple, then there exits a
point $z\in \beta(S^1)$, and a simple closed curve
$\alpha:S^1\rightarrow S$ which is homotopic to $\phi^{-k_2m}(c)$,
such that $q(\alpha)=\beta$, and there exit two points $x_1\neq
x_2\in \alpha(S^1)$ such that $q(x_1)=q(x_2)=z$. Since
$p\phi^{k_1m}(c)$ is homotopic to $q\phi^{-k_2m}(c^2)$, there
exists a simple closed curve $\eta$ from $S^1$ to $S$ which is
homotopic to $\phi^{k_1m}(c)$, and whose image under the map $p$
goes around $\beta$ twice, to be more precise, $p(\eta)=\beta ^2$.
It follows that there are four different points
$y_1,y_2,y_3,y_4\in\eta(S^1) $ such that
$p(y_1)=p(y_2)=p(y_3)=p(y_4)=z$, which conflicts with the fact
that $p:S\rightarrow F$ is an index 2 covering map.

By iterating, we have:

$$pi\zeta^{nk_1} \ \text {is homotopic to} \ qj\sigma^{nk_2},\ \text{for all}\ n\in N $$
$$ p\phi^{nk_1m}i(\gamma)\ \text {is homotopic to} \ q\phi^{-nk_2m}j(\gamma),\ \text{for all}\ n\in
 N$$
$$p\phi^{nk_1m}(c)\ \text{is homotopic to}\ q\phi^{-nk_2m}(c^2),\ \text{for all}\ n\in N$$

By using the same argument, we know $q\phi^{-nk_2m}(c)$ is
homotopic to a simple closed curve on $F$, for all $n\in N$. Let
$\alpha_n$ denote the geodesics in the free homotopy class of
$\phi^{-nk_2m}(c)$, there exists a subsequence of $\alpha_n$,
without loss of generality, still call it $\alpha_n$, such that
$\alpha_n\rightarrow \Lambda^u$ as $n\rightarrow \infty$. Since
$q\phi^{-nk_2m}(c)$ is homotopic to a simple closed curve on $F$
for all n, the geodesics in the free homotopic classes of
$q\phi^{-nk_2m}(c)$ converge to a geodesic lamination
$\Theta\subset F$, by passing to a subsequence. It follows that
$q(\Lambda^u)$ is a geodesic lamination.

Notice that in the proof, we can only lift $\phi^m: S\rightarrow
S$ by $i$ and $j$ to homeomorphisms of $G$ for some $m\in N$, but
the end points of $\partial\widetilde p_i(\widetilde\Lambda^s)$
and $\partial\widetilde q_j(\widetilde\Lambda^u)$ for any $i,j\in
\{1,2\}$ do not depend on $m$. Therefore we have proved that
$\{\partial\widetilde p_1(\widetilde\Lambda^s),\partial \widetilde
p_2(\widetilde \Lambda^s),\partial \widetilde
q_1(\widetilde\Lambda^u),$ $\partial \widetilde
q_2(\widetilde\Lambda^u)\}$ is a pairwise disjoint set. According
to Theorem 6, we know $\pi_1(\mathcal G_{\phi^m})$ is hyperbolic
for sufficiently large $m$.
\end{proof}
\bigskip

\section{An example which is not abstractly commensurate to a surface-by-free group}

In this section, we will show that there exist a graph of
surfaces whose fundamental group is hyperbolic, but is not
abstractly commensurate to any surface-by-free group, for any
closed hyperbolic surface or orbifold $S^{\prime}$ and any free group $K$. Therefore
this group is different from all the groups constructed in
\cite{Mosher:hypbyhyp}. By applying Theorem 1.1 in
\cite{FarbMosher:sbf}, it follows that the example constructed
here is not even quasi-isometric to any surface-by-free group.

Recall that, groups $G$ and $H$ are called \emph{abstractly
commensurate}, if there exist finite index subgroups $G_1<G$ and
$H_1<H$, so that $G_1$ is isomorphic to $H_1$. A group $G$ is
called a \emph{surface-by-free} group, if there is a
hyperbolic surface or a hyperbolic orbifold $S$, and a free group
$K$, such that there exists a short exact sequence:

$$1\rightarrow \pi_1(S)\rightarrow G\rightarrow K\rightarrow 1 $$

First, we shall give a necessary and sufficient condition for a
group to be abstractly commensurate to a surface-by-free group.
Second, we shall construct a non-hyperbolic graph of surfaces
$\mathcal G$, by applying the condition, whose fundamental group
is not abstract commensurate to any surface-by-free group.
Finally, we shall construct a hyperbolic graph of surfaces
$\mathcal G_{\phi^m}$ from $\mathcal G$ such that $\pi_1(\mathcal
G_{\phi^m})$ is not abstractly commensurate to any surface-by-free
group.

Let $t$ denote the Bass-Serre tree of a graph of surfaces
$S\Gamma$, and let $V,\ E$ denote the set of all the vertices and
edges of $t$ respectively. $\pi_1(S\Gamma)$ acts on $t$ with
subgroups $stab(v)$ and $stab(e)$, which stabilize the vertex
$v\in V$ and the edge $e\in E$ respectively.

\begin{lemma}The fundamental group of a graph of surfaces $S\Gamma$ is
 abstractly commensurate to a surface-by-free group if and only if
$[stab(v):\cap_{w\in V}stab(w)]<\infty$, for any $v\in V$.
\end{lemma}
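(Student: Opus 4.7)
Plan. I will analyze the action of $\pi_1(S\Gamma)$ on the Bass--Serre tree $t$, and set $N := \bigcap_{w\in V}\mathrm{stab}(w)$, which is precisely the kernel of this action. The hypothesis $[\mathrm{stab}(v):N]<\infty$ says exactly that the quotient $Q := \pi_1(S\Gamma)/N$ acts on $t$ with finite vertex stabilizers.

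For the direction $(\Leftarrow)$: since $N$ is a finite-index subgroup of each vertex stabilizer $\mathrm{stab}(v)\cong\pi_1(F_v)$, $N$ is itself the fundamental group of a closed hyperbolic surface or orbifold (a common finite cover of the $F_v$'s). The quotient $Q$ acts faithfully on $t$ with finite vertex stabilizers, so by Bass--Serre theory $Q$ is the fundamental group of a graph of finite groups and is therefore virtually free. Pulling back a finite-index free subgroup $K\leq Q$ to a finite-index subgroup $H\leq\pi_1(S\Gamma)$ yields the short exact sequence $1\to N\to H\to K\to 1$, exhibiting $\pi_1(S\Gamma)$ as abstractly commensurable with the surface-by-free group $H$.

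For $(\Rightarrow)$: passing to a common finite-index subgroup, we may assume $\pi_1(S\Gamma)$ contains a finite-index subgroup $H$ with a short exact sequence $1\to\Sigma\to H\to K\to 1$, where $\Sigma$ is a closed hyperbolic surface or orbifold group and $K$ is free. The main step is to show $\Sigma\leq N$, i.e., $\Sigma$ fixes every vertex of $t$. Once this is established, $\Sigma\leq\mathrm{stab}_H(v) := H\cap\mathrm{stab}(v)$, and since both $\Sigma$ and $\mathrm{stab}_H(v)$ are closed surface groups (the latter being finite index in $\mathrm{stab}(v)\cong\pi_1(F_v)$), the classical subgroup theorem for closed surface groups (every subgroup is either free or of finite index) forces $\Sigma$ to have finite index in $\mathrm{stab}_H(v)$, so that $[\mathrm{stab}(v):N]<\infty$.

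The principal obstacle is proving $\Sigma\leq N$. My plan is to study $\Sigma$'s induced action on $t$, exploiting two structural facts: every subgroup of a closed surface group is free or of finite index; and each edge stabilizer $\mathrm{stab}(e)$ is a closed surface group of finite index in both adjacent vertex stabilizers, since the $p_e$ are finite coverings. First, I rule out $\Sigma$ having a hyperbolic element on $t$: restricting to the minimal $\Sigma$-invariant subtree $T_\Sigma$ gives a splitting of $\Sigma$ whose vertex groups are subgroups of $\Sigma$, hence free or of finite index, with edge groups of finite index in adjacent vertex groups. A finite-index vertex group would force bounded $\Sigma$-orbits and contradict the existence of a hyperbolic element, so all vertex groups of the splitting are free; an Euler characteristic computation for a reduced graph of finitely generated free groups with edge-indices at least $2$ then produces $\chi(\Sigma)\geq 0$, contradicting $\chi(\Sigma)<0$. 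Hence every element of $\Sigma$ acts elliptically on $t$; since $\Sigma$ is finitely generated and the Helly property for subtrees of $t$ ensures pairwise intersections of fixed-point sets are non-empty, $\Sigma$ fixes some vertex $v_0$. Finally, $t$ is locally finite (each vertex has finitely many incident edges, by finiteness of the covering indices), so every $\Sigma$-orbit is contained in a sphere around $v_0$ and is therefore finite; combining this with normality of $\Sigma$ in $H$ and cocompactness of the $H$-action on $t$ allows one to propagate the fixed-vertex property from $v_0$ to every vertex of $t$, yielding $\Sigma\leq N$ and completing the argument.
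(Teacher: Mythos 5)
Your ``if'' direction is correct and is essentially the paper's own argument: the kernel $N=\cap_{w\in V}stab(w)$ of the action on $t$ is, by hypothesis, a finite-index (hence surface/orbifold) subgroup of each vertex stabilizer, the quotient acts cocompactly on $t$ with finite stabilizers, Scott--Wall gives virtual freeness, and pulling back a free finite-index subgroup produces the surface-by-free group. Your ``only if'' direction also follows the paper's skeleton: pass to a finite-index surface-by-free subgroup $H$ with fiber $\Sigma$, show $\Sigma$ lies in every vertex stabilizer, then use the fact that a non-free (non-virtually-free, in the orbifold case) subgroup of a closed surface or orbifold group has finite index. The paper simply asserts the middle step --- that the fiber ``acts trivially on $t$'' --- whereas you attempt to prove it, and that attempt is where there is a genuine gap.

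The gap is in your exclusion of hyperbolic elements of $\Sigma$. The vertex groups of the induced splitting of $\Sigma$ are $\Sigma\cap stab(v)=\ker\bigl(stab_H(v)\to K\bigr)$; whenever this restriction has nontrivial image in the free group $K$, its kernel is a nontrivial normal subgroup of infinite index in the closed surface group $stab_H(v)$ and is therefore infinitely generated (finitely generated normal subgroups of surface groups are trivial or of finite index). So the phrase ``reduced graph of finitely generated free groups'' in your Euler-characteristic step assumes finite generation exactly in the case that must be ruled out, and the properties you actually invoke do not suffice: a closed surface group does act on a tree with hyperbolic elements, with free vertex stabilizers and with edge groups of finite index in the vertex groups of the splitting, namely the action on a simplicial line induced by a surjection $\pi_1(S)\to\mathbb{Z}$, where every vertex and edge stabilizer is the infinitely generated kernel. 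The analogous danger occurs inside honest surface-by-free groups: the graph of a surjection $\pi_1(S)\to F$ is a closed surface subgroup of $\pi_1(S)\times F$ meeting the fiber in an infinite-index subgroup, so ``$\Sigma\cap stab(v)$ is large'' cannot be had for free. To exclude hyperbolic elements you must genuinely use the ambient data --- that the $stab(v)$ are closed surface or orbifold groups with finite-index edge inclusions, that $\Sigma$ is normal in the cocompactly acting $H$ with free quotient --- and establishing either the triviality of the images $stab_H(v)\to K$ or the needed finite generation is essentially the whole content of the key claim; your sketch does not supply it. (The remaining ingredients --- Serre's lemma giving a global fixed point for a finitely generated elliptic group, and the propagation $\mathrm{Fix}(\Sigma)\supseteq$ convex hull of $Hv_0=t$ via normality and minimality --- are fine.)
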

\begin{proof}
According to \cite{FarbMosher:sbf}, a finite index subgroup of a
surface-by-free group is a surface-by-free group. If
$\pi_1(S\Gamma)$ is abstractly commensurate to a surface-by-free
group, then there exists a finite index subgroup of $H$ of
$\pi_1(S\Gamma)$ which is isomorphic to a surface-by-free group.

$H$ acts on $t$, and $[stab(v):H\cap stab(v)]\leq
[\pi_1(S\Gamma):H]$ is finite. $H$ acts on $t$ with compact
quotient, $t$ may be identified with the Bass-Serre tree of $H$.
Since $H$ is isomorphic to a surface-by-free group
$\pi_1(S^{\prime})\rtimes F$, where $S^{\prime}$ is a hyperbolic
surface, $F$ is a finite rank free group, there exists a normal
subgroup $N$ of $H$ which is isomorphic to $\pi_1(S^{\prime})$,
such that $N$ acts trivially on $t$.

Let $N$ denote $\cap_{w\in V}(stab(w)\cap H)$ which is a finite
index subgroup of $stab(v)\cap H$ for any vertex $v\in t$, i.e.,
$[stab(v)\cap H:\cap_{w\in V}(stab(w)\cap H)]<\infty$. Therefore:
\begin{align*}
 &[stab(v):\cap_{w\in V}stab(w)]<[stab(v):\cap_{w\in
V}(stab(w)\cap H)]\\
&=[stab(v):H\cap stab(v)][H\cap stab(v):\cap_{w\in V}(stab(w)\cap
H)]<\infty.
\end{align*}

We have finished the proof for one direction.

Now we will prove the other direction. The action of
$\pi_1(S\Gamma)$ on $t$ induces a homomorphism $\sigma:
\pi_1(S\Gamma)\rightarrow Aut(t)$. Let $K=\cap_{w\in V}stab(w)$,
$K=ker(\sigma)$. Since $K$ is a finite index subgroup of $stab(v)$
for any $v\in V$, $\pi_1(S\Gamma)/K$ acts on $t$ with finite edge
and vertex stabilizers. In addition $\pi_1(S\Gamma)/K$ acts on $t$
cocompactly. Therefore $t/(\pi_1(S\Gamma)/K)$ is a finite graph of
finite groups. Applying Theorem 7.3 in \cite{ScottWall}, it
follows that $\pi_1(S\Gamma)/K$ is virtually free. Hence
$\pi_1(S\Gamma)$ is abstractly commensurate to a surface-by-free
group.
\end{proof}

In the rest of this paper, let $\mathcal G$ denote a graph of
surfaces as in Figure 5,
\begin{figure}[h]
\begin{center}
\includegraphics{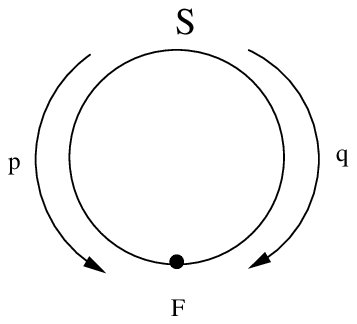}
\end{center}
\caption{}
\end{figure}
where $S$, $F$, $p$, $q$ and the simple closed curves $c\subset
S$, $a\subset F$ are as described in Corollary \ref{hnn}.

The conclusion of the following lemma that $\pi_1(\mathcal G)$ is
not commensurate to a surface-by-free group was discovered and
proved independently by Chris Odden in his thesis, and by Lee
Mosher. I will give a different proof which will generalize to my
later examples.

Define subgroups $L_i$, $R_i$, $G_i$ of $\pi_1(S)$, and subgroups
$H_i$ of $\pi_1(F)$ by induction as follows:

\begin{itemize}

\item let $H_0=\pi_1(F)$, $G_0=\pi_1(S)$,

\item let $H_1=p_{\ast}(G_0)\cap q_{\ast}(G_0)$,
$L_1=p_{\ast}^{-1}(H_1)$, $R_1=q_{\ast}^{-1}(H_1)$, $G_1=L_1\cap
R_1$,

\item let $H_{i+1}=p_{\ast}(G_i)\cap q_{\ast}(G_i)$,
$L_{i+1}=p_{\ast}^{-1}(H_{i+1})$,
$R_{i+1}=q_{\ast}^{-1}(H_{i+1})$, $G_{i+1}=L_{i+1}\cap R_{i+1}$.

\end{itemize}

 From $p_{\ast}([c])=q_{\ast}([c^2])=[a^2]$, we know $[a^2]\in
H_1$ but $[a]\notin H_1$,  $[c]\in L_1$, $[c^2]\in R_1$ but
$[c]\notin R_1$.

Therefore $L_1\neq R_1$, $[c^2]\in G_1$, $[c]\notin G_1$.

Similarly, from $p_{\ast}([c^2])=[a^4]$, $q_{\ast}([c^2])=[a^2]$,
we know $p_{\ast}(G_1)\neq q_{\ast}(G_1)$, $[c^2]\in L_2$,
$[c^4]\in R_2$, but $[c^2]\notin R_2$, $[c^4]\in G_2$.

Inductively, we have $[c^{2^n}]\in G_n$,
$p_{\ast}([c^{2^n}])=[a^{4^n}]$, $q_{\ast}([c^{2^n}])=[a^{2^n}]$,

\noindent so $p_{\ast}(G_n)\neq q_{\ast}(G_n)$,  $[c^{2^n}]\in
L_{n+1}$, $[c^{2^n}]\notin R_{n+1}$, but $[c^{2^{n+1}}]\in
R_{n+1}$.

Hence we get two sequences $\{L_i\}$ and $\{R_i\}$ of finite index
normal subgroups of $\pi_1(S)$, the indexes of $[\pi_1(S):L_i]$
and $[\pi_1(S):R_i]\rightarrow \infty$ as $i\rightarrow \infty$.

\begin{lemma}
Suppose the edge group $\pi_1(S)$ of $\pi_1(\mathcal G)$ contains
two nested sequences of finite index normal subgroups
$L_1>L_2>\cdots$ and $R_1>R_2>\cdots$ which are constructed
inductively as follows:
\begin{enumerate}
\item $H_1=p_{\ast}(\pi_1(S))\cap q_{\ast}(\pi_1(S))$,
$L_1=p_{\ast}^{-1}(H_1)$, $R_1=q_{\ast}^{-1}(H_1)$, $G_1=L_1\cap
R_1$

\item $H_{i+1}=p_{\ast}(G_i)\cap q_{\ast}(G_i)$

 \item $L_{i+1}=p_{\ast}^{-1}(H_{i+1})$, $R_{i+1}=q_{\ast}^{-1}(H_{i+1})$, $G_{i+1}=L_{i+1}\cap R_{i+1}$

\end{enumerate}
If  $L_i\neq R_i$ for all $i$, then $\pi_1(\mathcal G)$ is not
commensurate to a surface-by-free group.
\end{lemma}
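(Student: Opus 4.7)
The plan is to invoke Lemma 10. Since $\mathcal{G}$ has one vertex $F$ and one loop edge $S$, the group $\pi_1(\mathcal{G})$ is the HNN extension
\[
\pi_1(\mathcal{G}) \;=\; \bigl\langle\,\pi_1(F),\,\tau\;\big|\;\tau p_{*}(s)\tau^{-1} = q_{*}(s),\ s\in\pi_1(S)\,\bigr\rangle,
\]
acting on its Bass--Serre tree $t$. Let $N := \bigcap_{w\in V(t)}\mathrm{stab}(w)$, the kernel of this action, equivalently the normal core of $\pi_1(F)$ in $\pi_1(\mathcal{G})$. By Lemma 10 it suffices to show $[\pi_1(F):N] = \infty$.

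The heart of the argument is the inductive claim $N \subseteq H_n$ for every $n \geq 1$. Write $A := p_{*}(\pi_1(S))$ and $B := q_{*}(\pi_1(S))$. For the base case, Britton's lemma yields $\pi_1(F)\cap\tau\pi_1(F)\tau^{-1} = B$ and $\pi_1(F)\cap\tau^{-1}\pi_1(F)\tau = A$, so $N \subseteq A\cap B = H_1$. For the inductive step, assume $N \subseteq H_n$. Normality of $N$ together with the HNN relation gives the equivalence $p_{*}(s)\in N \iff q_{*}(s)\in N$ for every $s\in\pi_1(S)$, so the set $\widetilde N := p_{*}^{-1}(N) = q_{*}^{-1}(N)$ is a well-defined subgroup of $\pi_1(S)$. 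Since $N \subseteq H_n$,
\[
\widetilde N \;\subseteq\; p_{*}^{-1}(H_n)\cap q_{*}^{-1}(H_n) \;=\; L_n\cap R_n \;=\; G_n,
\]
hence $N = p_{*}(\widetilde N)\subseteq p_{*}(G_n)$ and $N = q_{*}(\widetilde N)\subseteq q_{*}(G_n)$, giving $N\subseteq H_{n+1}$.

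It remains to convert the hypothesis $L_n\neq R_n$ into strict descent $H_n \supsetneq H_{n+1}$. Injectivity of $p_{*}, q_{*}$ yields $p_{*}(G_n) = H_n\cap p_{*}(R_n)$ and $q_{*}(G_n) = H_n\cap q_{*}(L_n)$, so
\[
H_{n+1} \;=\; H_n\cap p_{*}(R_n)\cap q_{*}(L_n).
\]
Using $p_{*}(L_n) = H_n$ together with injectivity of $p_{*}$, we see $H_n\subseteq p_{*}(R_n) \iff L_n\subseteq R_n$, and dually $H_n\subseteq q_{*}(L_n) \iff R_n\subseteq L_n$. Hence $H_n = H_{n+1}$ iff $L_n = R_n$, so under the hypothesis the chain $H_1\supsetneq H_2\supsetneq\cdots$ is strictly descending through finite-index subgroups of $\pi_1(F)$. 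Therefore $[\pi_1(F):H_n]\to\infty$, which combined with $N\subseteq\bigcap_n H_n$ gives $[\pi_1(F):N] = \infty$, and Lemma 10 finishes the proof.

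The main obstacle I anticipate is the bookkeeping in the inductive step, in particular establishing the equivalence $p_{*}(s)\in N \iff q_{*}(s)\in N$ and the base identifications $\pi_1(F)\cap\tau^{\pm 1}\pi_1(F)\tau^{\mp 1} = A$ or $B$ via Britton's lemma; once these HNN normal-form computations are in hand, the remaining manipulation of the combinatorially defined subgroups $L_n, R_n, G_n, H_n$ is routine.
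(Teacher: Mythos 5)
Your proof is correct, and it reaches the conclusion by a genuinely different mechanism than the paper, although both arguments funnel through the same reduction (Lemma 10: non-commensurability follows once some vertex stabilizer has infinite index over $\bigcap_{w\in V}\mathrm{stab}(w)$). The paper works on the Bass--Serre tree itself: identifying $\pi_1(S)$ with an edge stabilizer $\mathrm{stab}(e_1)$ and letting $g$ be the stable letter, it realizes $R_i$ (and similarly $L_i$) as the intersection of edge stabilizers along the translated path $e_1\ast ge_1\ast\cdots\ast g^{i-1}e_1$, then uses $[\pi_1(S):L_i],[\pi_1(S):R_i]\to\infty$ to force $[\mathrm{stab}(e):\bigcap_{\epsilon\in E}\mathrm{stab}(\epsilon)]=\infty$, and finally passes from edge to vertex stabilizers before invoking Lemma 10. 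You instead argue algebraically with the HNN structure: Britton's lemma (equivalently, the fact that adjacent vertex stabilizers in the tree meet in the edge stabilizer) pins the normal core $N$ inside $H_1$, normality of $N$ plus the relation $\tau p_*(s)\tau^{-1}=q_*(s)$ drives the induction $N\subseteq H_n$, and the identity $H_{n+1}=H_n\cap p_*(R_n)\cap q_*(L_n)$ converts $L_n\neq R_n$ into strict descent of the $H_n$. Two things your route buys: you stay entirely inside the vertex group $\pi_1(F)$, so no edge-to-vertex transfer is needed; and you deduce the needed index growth directly from the stated hypothesis $L_i\neq R_i$, whereas the paper's written proof invokes $[\pi_1(S):L_i],[\pi_1(S):R_i]\to\infty$, a fact it establishes beforehand only in the discussion of the specific example. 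What the paper's route buys is a concrete geometric picture of what the subgroups $L_i,R_i$ are in the tree, which is in the spirit of how Lemma 10 is proved. (Minor bookkeeping you use implicitly and could state: $N\subseteq H_n\subseteq H_1\subseteq p_*(\pi_1(S))$ is what justifies $N=p_*(\widetilde N)$, and all $H_n$ have finite index in $\pi_1(F)$ because $p,q$ are finite covers.)
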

\begin{proof}
It is known that every edge or vertex stabilizer in the Bass-Serre
tree $t$ is isomorphic to some edge or vertex group of the graph
of groups. Let $e_1$ be an edge of the Bass-Serre tree $t$ such
that the stabilizer $stab(e_1)=\pi_1(S)$. Let $g$ be the generator
of the underlying graph $\Gamma$ of the graph of spaces $\mathcal
G$; \cite{ScottWall} says that if $\pi_1(S)$ is identified with
$p_{\ast}(\pi_1(S))$, then $q_{\ast}(\pi_1(S))=g^{-1}\pi_1(S)g$.
 There exists a unique edge $e_2\in t$, such that $e_2=ge_1$. It is easy to see that $R_1=q^{-1}_{\ast}(p_{\ast}(\pi_1(S)\cap
q_{\ast}(\pi_1(S)))=stab(e_1)\cap stab(e_2)$. Let $e_j=ge_{j-1}$
for a positive integer $j$, let $\alpha_i$ be the oriented path
$e_1\ast \cdots \ast e_i$ in the Bass-Serre tree $t$.
$\cap_{\epsilon\in \alpha_i} stab(\epsilon)=\cap^i_{j=1}
stab(e_j)=R_i$. Similarly, there exists another sequence of
oriented paths $\{\beta_{k}\}$ in $t$ such that $\cap_{\epsilon\in
\beta_{k}}stab(\epsilon)=L_{k}$. Therefore
$[\pi_1(S):L_i]\rightarrow \infty$ and $[\pi_1(S):R_i]\rightarrow
\infty$ imply $[stab(e):\cap_{\epsilon \in
E}stab(\epsilon)]=\infty$. For the case studied here, every edge
stabilizer is a finite index subgroup of some vertex stabilizers,
if the vertex is an end point of that edge. So
$[stab(e):\cap_{\epsilon \in E}stab(\epsilon)]=\infty$ implies
$[stab(v):\cap_{w \in V}stab(w)]=\infty$. According to Lemma 10,
$\pi_1(\mathcal G)$ is not commensurate to a surface-by-free
group.
\end{proof}

\bigskip

In order to construct a group which is not abstractly commensurate
to a surface-by-free group, our first strategy is to find a
pseudo-Anosov mapping class $\Phi$ which fixes all the finite
index normal subgroups of $\pi_1(S)$. But unfortunately, the
theorem below tells us that there does not exist such a
pseudo-Anosov mapping class.

\begin{theorem}
Let $S_n$ be a closed surface with genus $n$, where $n\geq 2$. For
any $\Phi \in Aut(\pi_1(S_n))$, if $\Phi$ fixes all the finite
index normal subgroups of $\pi_1(S_n)$, then $\Phi \in
Inn(\pi_1(S_n))$.

\end{theorem}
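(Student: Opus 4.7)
The plan is to prove the contrapositive: if $\Phi \in \mathrm{Aut}(\pi_1(S_n))$ is not inner, I will exhibit a finite-index normal subgroup that $\Phi$ does not preserve. Since the hypothesis depends only on the outer class $[\Phi] \in \mathrm{Out}(\pi_1(S_n))$, the Dehn--Nielsen--Baer theorem identifies this with a non-trivial mapping class $[f]\in\mathrm{MCG}^{\pm}(S_n)$, and the condition $\Phi(N)=N$ for a finite-index normal $N\lhd\pi_1(S_n)$ is equivalent to $f$ lifting to the Galois cover of $S_n$ associated to $N$ (any two representatives of $[\Phi]$ differ by an inner automorphism, which trivially preserves $N$). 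So the theorem reduces to: the identity is the only mapping class of $S_n$ lifting to every finite regular cover.

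First I would constrain $[f]$ via the abelianization. Every finite-index subgroup of $H_1(S_n;\mathbb Z)\cong\mathbb Z^{2n}$ pulls back to a finite-index normal subgroup of $\pi_1(S_n)$ containing the commutator subgroup, so $\Phi_\ast\in GL_{2n}(\mathbb Z)$ must preserve every finite-index subgroup of $\mathbb Z^{2n}$. Testing on $L_{v,p}:=\{x\in\mathbb Z^{2n}: v\cdot x\equiv 0\pmod p\}$ for primitive $v\in\mathbb Z^{2n}$ and prime $p$, preservation is equivalent to $\Phi_\ast^T v\equiv \lambda_{v,p}\, v\pmod p$; since every vector must be an eigenvector of $\Phi_\ast^T$ modulo every prime, one concludes $\Phi_\ast=\pm I$. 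Thus under the symplectic representation $\psi\colon\mathrm{MCG}^{\pm}(S_n)\to Sp_{2n}(\mathbb Z)$, $\psi([f])=\pm I$, placing $[f]$ in the Torelli group $\mathcal I(S_n)$ up to composition with a finite-order involution realizing $-I$ on $H_1$ (the hyperelliptic class).

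Next I would split on the Nielsen--Thurston type of $[f]$. If $[f]$ has finite order, Nielsen realization represents $f$ as an isometry of some hyperbolic structure whose quotient is a 2-orbifold $\mathcal O$ with cone points of orders dividing $\mathrm{ord}([f])$; a surjection $\pi_1(S_n)\to G$ to a finite group whose exponent is coprime to the cone-point orders does not extend to $\pi_1^{\mathrm{orb}}(\mathcal O)$, which obstructs any lift of $f$ to the corresponding cover. If $[f]\in\mathcal I(S_n)$ has infinite order, I would appeal to the triviality of the intersection of the Johnson filtration of $\mathcal I(S_n)$ (Morita): for some $k$, $[f]$ acts non-trivially on the finitely generated nilpotent quotient $\pi_1(S_n)/\gamma_k(\pi_1(S_n))$. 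Combined with Grossman's theorem that surface groups are conjugacy separable and with the residual finiteness of $\mathrm{Out}(\pi_1(S_n))$, this provides a finite quotient of $\pi_1(S_n)$ on which the induced action of $\Phi$ is not inner; tracking a conjugacy class moved by $\Phi$ (which exists by conjugacy separability combined with the failure of $\Phi$ to be pointwise inner) through a suitable semidirect product with a symmetric group then exhibits the desired finite-index normal subgroup that $\Phi$ fails to preserve.

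The hard part will be the last step of the Torelli case: a priori, an automorphism of a finite quotient can preserve every normal subgroup while acting non-trivially (for example, the power automorphism $-I$ on an abelian quotient is non-inner but preserves every subgroup). So the construction of the witnessing finite-index normal subgroup must exploit non-abelian, non-characteristic structure, ensuring that the non-inner induced action genuinely moves a subgroup rather than merely permuting elements within each normal subgroup. This careful conversion of the pronilpotent-level non-triviality of $[f]$ into concrete non-preservation at the level of finite quotients, using Grossman's conjugacy separability to produce a non-characteristic witness, is the technical heart of the argument.
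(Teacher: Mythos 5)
Your outline never actually carries out the decisive step --- exhibiting a finite-index normal subgroup that $\Phi$ moves --- and in the one place where you offer a concrete mechanism, the reasoning fails. In the finite-order case you assert that if a surjection $\pi_1(S_n)\rightarrow G$ onto a finite group of exponent coprime to the cone-point orders does not extend to $\pi_1^{\mathrm{orb}}(\mathcal O)$, then $f$ cannot lift to the corresponding cover. But $f$ lifts to the cover attached to a normal subgroup $N$ if and only if $f_{\ast}(N)=N$, and this can hold even when the quotient map does not extend over the orbifold group. Concretely, take the genus-two hyperelliptic involution $\iota$ (which acts as $-I$ on $H_1$) and a homology character $\chi:\pi_1(S_2)\rightarrow \mathbb{Z}/3$: then $\iota_{\ast}(\ker\chi)=\ker(-\chi)=\ker\chi$, so $\iota$ preserves the kernel and lifts, although $\chi$ certainly does not extend to the $(2,2,2,2,2,2)$-orbifold group, whose abelianization is $2$-torsion. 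Since your homology reduction leaves exactly the classes acting as $-I$ (the hyperelliptic type) as the nontrivial finite-order case, the case your obstruction must handle is precisely the one where it is false.

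In the infinite-order (Torelli) case the same missing step reappears, and you say so yourself: nontrivial action on a nilpotent quotient $\pi_1/\gamma_k$ (Morita), or a conjugacy class moved by $\Phi$ detected in a finite quotient via Grossman's conjugacy separability, does not by itself yield a moved finite-index normal subgroup, because the kernels of these characteristic quotients and their finite refinements are preserved by every automorphism, and an automorphism may permute conjugacy classes while still normalizing every finite-index normal subgroup a priori. Your closing paragraph defers exactly this conversion, which is the content of the theorem, so the proposal is incomplete at its core. For comparison, the paper closes this gap with a curve-based construction: if $\Phi$ is not inner it moves the free homotopy class of some non-separating simple closed curve $a$; attaching $2$-handles to $S\times I$ along $a$ gives a Haken manifold whose fundamental group is $\pi_1(S)$ modulo the normal closure of $[a]$, and the Bogopolski--Kudryavtseva--Zieschang description of that normal closure together with Hempel's residual finiteness produces a finite-index normal subgroup containing $[a]$ but not $[\phi(a)]$. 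Some explicit non-characteristic witness of this kind is what your argument still needs (and note that even this route requires $[\phi(a)]$ not conjugate to $[a]^{\pm 1}$, so the curve-inverting, hyperelliptic-type classes require separate care in any complete write-up).
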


Before proving this theorem, we introduce some related history and
preliminaries first.

In \cite{lubotzky:normaautoofreegp}, Lubotzky proved that for any
free group $F_n$, $n\geq 2$, if $\Psi\in Aut(F_n)$ fixes all the
finite index normal subgroups of $F_n$, then $\Psi\in Inn(F_n)$.
In particular, every normal automorphism of $F_n$ is inner.
Bogopolski, Kudryavtseva and Zieschang in
\cite{bkz:scurveonsurfaces} proved that for any closed hyperbolic
surface $S_n$ with genus $n$ not less than 2, if $\Phi\in
Aut(\pi_1(S_n))$ fixes all the normal subgroups of $\pi_1(S_n)$,
then $\Phi\in Inn(\pi_1(S_n))$. The main theorem in that paper
says for any non-separating simple closed curve $\alpha$ on $S$,
up to conjugate equivalent, $\alpha$ is the only non-separating
simple closed curve in its normal closure. The theorem in
\cite{bkz:scurveonsurfaces} says:
\begin{theorem}
Let $S$ be a closed orientable surface and $g$, $h$ are
non-trivial elements of $\pi_1(S)$ both containing simple closed
two-sided curves $\gamma$ and $\kappa$, resp. The group element
$h$ belongs to the normal closure of $g$ if and only if $h$ is
conjugate to $g^{\epsilon}$ or to $(gug^{-1}u^{-1})^{\epsilon}$,
$\epsilon\in \{1,-1\}$; here $u$ is a homotopy class containing a
simple closed curve $\mu$ which properly intersects $\gamma$
exactly once.
\end{theorem}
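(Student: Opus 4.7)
The plan is to prove the two directions of the biconditional separately; the forward direction is straightforward, while the reverse direction requires a covering-space analysis of the lifted simple curve.

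For the easy direction, if $h$ is conjugate to $g^{\epsilon}$ then $h$ trivially lies in $\langle\langle g\rangle\rangle$. If $h$ is conjugate to $(gug^{-1}u^{-1})^{\epsilon}$, one writes $gug^{-1}u^{-1} = g\cdot(ug^{-1}u^{-1})$, a product of $g$ with a conjugate of $g^{-1}$, both of which lie in $\langle\langle g\rangle\rangle$; since the normal closure is closed under inverses, conjugates, and products, the conclusion follows.

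For the hard direction, I would let $N = \langle\langle g\rangle\rangle$ and consider the regular cover $\pi\colon \tilde{S}\to S$ corresponding to $N$, with deck group $Q = \pi_1(S)/N$. Since $g\in N$, the preimage $\tilde{\gamma} = \pi^{-1}(\gamma)$ is a $Q$-equivariant disjoint union of simple closed curves in $\tilde{S}$, cutting $\tilde{S}$ into components that cover the cut surface $S_0 = S\setminus\gamma$ (or, in the separating case, its two components $S_1, S_2$). Since $h\in N$, the curve $\kappa$ lifts to a simple closed curve $\tilde{\kappa}\subset\tilde{S}$. Isotoping to realize $\tilde{\kappa}$ in minimal position with $\tilde{\gamma}$, I would split on the geometric intersection number $n = i(\tilde{\kappa}, \tilde{\gamma})$. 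When $n = 0$, $\tilde{\kappa}$ lies in a single lifted piece, and tracking which elements of the piece's fundamental subgroup are killed by passage to $Q$ forces $\kappa$ to be isotopic to a boundary component of $S_0$, i.e., to $\gamma$, yielding $h$ conjugate to $g^{\pm 1}$. When $n\geq 1$, one traces $\tilde{\kappa}$ successively through the components of $\tilde{S}\setminus\tilde{\gamma}$, recording the deck translate at each crossing; closure of $\tilde{\kappa}$ dictates a word in $Q$ that must return to the identity, and the target is to show that simplicity of $\tilde{\kappa}$ forces $n = 2$, with the resulting configuration realizing $\kappa$ as the boundary of a regular neighborhood of $\gamma\cup\mu$ for some simple closed curve $\mu$ meeting $\gamma$ once --- precisely the conjugacy class of $[g, u] = gug^{-1}u^{-1}$.

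The hard part will be ruling out $n\geq 3$. This requires exploiting both the simplicity of $\tilde{\kappa}$ inside $\tilde{S}$ (forbidding the arcs of $\tilde{\kappa}\setminus\tilde{\gamma}$ from crossing each other within any single piece of $\tilde{S}\setminus\tilde{\gamma}$) and the simplicity of $\kappa$ in $S$ (restricting how those arcs can be identified under the $Q$-action, and so constraining the word in $Q$ that closes up $\tilde{\kappa}$). A case analysis on whether $\gamma$ is separating or non-separating enters here: in the separating case, no simple curve $\mu$ can meet $\gamma$ exactly once by parity of intersection, so the analysis collapses to $n = 0$ and the commutator form is vacuous; in the non-separating case, both $n = 0$ and $n = 2$ occur genuinely, corresponding to the two listed conjugacy classes.
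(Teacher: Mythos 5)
First, a point of reference: the paper does not prove this statement at all --- it is Theorem 13, quoted verbatim from Bogopolski--Kudryavtseva--Zieschang \cite{bkz:scurveonsurfaces} and used as a black box (only its easy direction is invoked later). So your attempt has to carry the full weight of that Math.\ Z.\ paper, and as written it is a plan rather than a proof. The easy direction is fine. In the hard direction, the decisive step is exactly the one you defer: you give no argument that a simple representative of a class in $\langle\langle g\rangle\rangle$ cannot cross $\gamma$ essentially. You describe ruling out $n\geq 3$ as ``the hard part'' and the identification of the $n=2$ configuration as ``the target,'' but offer no mechanism beyond the remark that arcs of $\tilde\kappa$ cannot cross inside a complementary piece of $\pi^{-1}(\gamma)$; that observation alone does not bound $n$, and this exclusion is the entire substance of the theorem.

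The case division is also miscast. The commutator class $[g,u]^{\pm1}$ is represented by the boundary of a regular neighborhood of $\gamma\cup\mu$, which is disjoint from $\gamma$; since freely homotopic simple closed curves are isotopic, in minimal position this conclusion occurs in your $n=0$ case, not at $n=2$. Correspondingly your $n=0$ analysis is incomplete: when $\gamma$ is nonseparating, $S_0$ has two boundary circles over $\gamma$, the kernel of $\pi_1(S_0)\to Q$ is the normal closure of \emph{both} boundary classes (this needs the graph-of-groups computation $Q\cong\bigl(\pi_1(S_0)/\langle\langle\gamma_1,\gamma_2\rangle\rangle\bigr)\ast\mathbb{Z}$, not just ``tracking which elements are killed''), so $\kappa$ bounds an embedded disk in the capped-off surface that may swallow one or two capping disks: one cap gives $h$ conjugate to $g^{\pm1}$, two caps give precisely the commutator class. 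The correct architecture is therefore (i) show that $\kappa$ can be isotoped off $\gamma$ whenever $[\kappa]\in\langle\langle g\rangle\rangle$ --- the hard step you have not addressed --- and then (ii) read off the dichotomy from the capping analysis in the disjoint case. Your proposal omits (i) and locates the commutator outcome in the wrong case, so it does not yet constitute a proof of the cited theorem.
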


I would like to thank Jason Deblois for help with Lemma 14.
\begin{lemma}
For any two non trivial, non freely homotopic, non-separating simple
closed curves $a$ and $b$ on S, let $[a],[b]$ denote the homotopy
class of them in $\pi_1(S)$. There exists a finite index normal
subgroup $N\in \pi_1(S)$, such that $[a]\in N$ and $[b]\notin N$.

\end{lemma}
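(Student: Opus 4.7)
The plan is to reduce the statement to showing $[b]$ lies outside the normal closure of $[a]$ in $\pi_1(S)$, dispose of this via Theorem 13, and then extract a finite normal quotient using residual finiteness. Let $N_0 = \langle\langle [a]\rangle\rangle$ denote the normal closure of $[a]$ in $\pi_1(S)$. To show $[b]\notin N_0$, I apply Theorem 13 to $g=[a]$ and $h=[b]$: both are nontrivial and represented by two-sided simple closed curves, so $[b]\in N_0$ would force $[b]$ to be conjugate to $[a]^{\pm1}$ or to $\bigl([a]\,u\,[a]^{-1}u^{-1}\bigr)^{\pm1}$ for some $u$ represented by a simple closed curve $\mu$ meeting $a$ exactly once. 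The hypothesis that $a$ and $b$ are not freely homotopic rules out the first case. For the second, a regular neighborhood of $a\cup\mu$ is a torus $T\subset S$ with one boundary circle, and $[a,u]$ is freely homotopic to $\partial T$, which is a \emph{separating} simple closed curve in $S$ (it bounds the subsurface $T$). Since free homotopy of essential simple closed curves on a surface coincides with isotopy, and isotopy preserves separability, the non-separating curve $b$ cannot be freely homotopic to $\partial T$. Hence $[b]\notin N_0$.

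Next, I claim the quotient $G=\pi_1(S)/N_0$ is residually finite. Because the mapping class group of $S$ acts transitively on isotopy classes of non-separating simple closed curves, after an automorphism of $\pi_1(S)$ I may take $[a]=a_1$ in the standard presentation $\langle a_1,b_1,\ldots,a_g,b_g\mid \prod_{i=1}^{g}[a_i,b_i]\rangle$. Killing $a_1$ reduces the single relation to $\prod_{i=2}^{g}[a_i,b_i]=1$, yielding
\[
G \;\cong\; \langle b_1\rangle * \pi_1(S_{g-1}) \;\cong\; \mathbb{Z}*\pi_1(S_{g-1}),
\]
a free product of residually finite groups, hence residually finite by Gruenberg's theorem on free products.

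Since $[b]$ has nontrivial image $\bar b\in G$, residual finiteness supplies a finite quotient $\pi\colon G\to Q$ with $\pi(\bar b)\neq 1$. Pulling back the kernel along $\pi_1(S)\twoheadrightarrow G\xrightarrow{\pi} Q$ produces a finite index normal subgroup $N\trianglelefteq\pi_1(S)$ with $N_0\subseteq N$ (so $[a]\in N$) and $[b]\notin N$, as required. The main obstacle lies in the first paragraph: because $[a]$ and $[b]$ may share the same homology class (and indeed any proportional class in $H_1(S;\mathbb{Z})$), no abelian obstruction suffices to separate them, and the geometric identification of $[a,u]$ with the separating boundary of a handle neighborhood is the essential nonabelian ingredient provided by Theorem 13.
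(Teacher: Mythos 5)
Your proof is correct, and its skeleton matches the paper's: use Theorem 13 to conclude $[b]\notin\langle\langle[a]\rangle\rangle$, observe that the quotient $\pi_1(S)/\langle\langle[a]\rangle\rangle$ is residually finite, and pull back a finite quotient separating $[b]$. Where you genuinely diverge is in how residual finiteness of that quotient is obtained. The paper attaches a $2$-handle to $S\times I$ along copies of $a$, so the quotient becomes the fundamental group of a Haken $3$-manifold, and then cites Hempel's residual finiteness theorem; you instead use the change-of-coordinates principle to normalize $[a]$ to a standard handle curve $a_1$, compute the quotient explicitly as $\mathbb{Z}*\pi_1(S_{g-1})$, and invoke Gruenberg's theorem on free products together with residual finiteness of surface groups. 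Your route is more elementary and self-contained (no $3$-manifold topology, and the structure of the quotient is made completely explicit), at the cost of the normalization step via the mapping class group action; the paper's $3$-manifold route needs no normalization and would apply even when the quotient is not so readily identified. You also spell out the application of Theorem 13 more carefully than the paper does, ruling out the commutator case because $[a]u[a]^{-1}u^{-1}$ is represented by the separating boundary of a one-holed torus neighborhood of $a\cup\mu$ (equivalently, it is null-homologous while $b$ is not), whereas the paper compresses this into the assertion that $a$ is the only non-separating simple closed curve in its normal closure. One small point to make explicit: read the hypothesis ``non freely homotopic'' for unoriented curves, so that conjugacy of $[b]$ to $[a]^{-1}$ is excluded along with conjugacy to $[a]$; this is how both the lemma and its application in Theorem 12 intend it.
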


A group $G$ is said to be \emph{residually finite}, if for any
element $g\in G$, $g\neq 1$, there exists a finite group $K$ and a
homomorphism $h:G\rightarrow K$, such that $h(g)\neq 1$.

A \emph{Haken manifold} is a compact, orientable, irreducible
3-manifold which contains a 2-sided incompressible surface.

\begin{proof}: Let $M=S\times I$, where $I$ is the interval $[0,1]$.
$\pi_1(M)$ is isomorphic to $\pi_1(S)$. Since $a$ is a simple
closed curve on $S$, attach a 2-handle $B$ to $M$ along $a\times
\{0\} \cup a\times \{1\}$ obtain a Haken manifold $M^{\prime}$.
This attachment gives a surjective homomorphism $\epsilon:
\pi_1(M)\rightarrow \pi_1(M^{\prime})$, and the kernel is the
normal closure of $[a]$. Since $a$ is the only non-separating
simple closed curve in the normal closure of $[a]$, by applying
Theorem 13, it follows that $[b]$ does not belong to the kernel of
$\epsilon$.

According to Theorem 1.1 in \cite{Hampel:residualfinite3mfld},
$\pi_1(M^{\prime})$ is residually finite. So for $[b]\in
\pi_1(M)$, there exist a finite group $K$ and a homomorphism
$\delta:\pi_1(M^{\prime})\rightarrow K$, such that $[b]\notin
ker(\delta)$.

Let $N$ denote the kernel $ker(\delta\circ\epsilon)$. Obviously,
$N$ is a finite index normal subgroup of $\pi_1(S)$, and $[a]\in
N$, but $[b]\notin N$.

\end{proof}

\begin{proof}[Proof of Theorem 12:] Let $\Phi$ be an element of $Aut(\pi_1(S))$, and let $\phi$
be a representative of it in $Homeo(S)$. According to
\cite{bkz:scurveonsurfaces}, if $\Phi\notin Inn(\pi_1(S))$, then
there exists a non-separating simple closed curve $a$ on $S$, such
that $a$ and $\phi(a)$ are not freely homotopic to each other.
According to Lemma 14, there exist a finite index normal subgroup
$N\lhd \pi_1(S)$, such that $[a]\in N$ and $[\phi(a)]\notin N$. It
follows that $\Phi(N)\neq N$.
\end{proof}
\bigskip

In the following, we shall construct a pseudo-Anosov mapping class
which does not fix all the finite index normal subgroups of
$\pi_1(S)$, but fixes $L_i$ and $R_i$ as in Lemma 11.

In the following, let $\mathcal G_{\phi^m}$ be a graph of surfaces
as in Figure 3, where $F,\ S,\ p,\ q$ as described in Corollary
\ref{hnn}.
\begin{theorem}
There exists a pseudo-Anosov homeomorphism $\phi\in Homeo(S)$, so
that $\pi_1(\mathcal G_{\phi^m})$ is hyperbolic but is not
commensurate to a surface-by-free group.
\end{theorem}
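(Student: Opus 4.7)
The plan is to produce a pseudo-Anosov $\phi\in\mathrm{Homeo}(S)$ satisfying two conditions simultaneously: first, the \emph{genericity condition}---the virtual centralizer $VC\langle\Phi\rangle$ of $\Phi=[\phi]\in MCG(S)$ has trivial intersection with each of the deck transformation groups $GD_p, GD_q$, so that Corollary~\ref{hnn} applies and $\pi_1(\mathcal G_{\phi^m})$ is hyperbolic for $m$ sufficiently large; and second, the \emph{invariance condition}---the automorphism $\Phi_*=\phi_*$ of $\pi_1(S)$ preserves every finite-index normal subgroup $L_i, R_i\lhd\pi_1(S)$ from the iterative construction preceding Lemma 11. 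Granted both, the hyperbolicity half of the theorem is immediate. For the non-commensurability half, a short induction on $i$ shows that the subgroups $\widetilde L_i, \widetilde R_i$ associated with $\mathcal G_{\phi^m}$ (defined via $p_*$ and $(q\phi^m)_*=q_*\circ\Phi_*^m$) coincide with $L_i$ and $R_i$: the automorphism property of $\Phi_*^m$ gives $\widetilde H_1=p_*(\pi_1(S))\cap q_*(\Phi_*^m(\pi_1(S)))=H_1$, hence $\widetilde L_1=L_1$, and invariance gives $\widetilde R_1=\Phi_*^{-m}(R_1)=R_1$; the inductive step repeats the same argument using $\Phi_*^m(G_{i-1})=G_{i-1}$. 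Since $L_i\neq R_i$ is already established via the classes $[c^{2^{i-1}}]$, Lemma 11 then implies $\pi_1(\mathcal G_{\phi^m})$ is not commensurate with any surface-by-free group.

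To meet the invariance condition, I would use Penner's construction (Theorem~\ref{penner}). Since each $G_i=L_i\cap R_i$ is normal in $\pi_1(S)$, a Dehn twist $\tau_\alpha$ along a simple closed curve $\alpha$ with $[\alpha]\in G_i$ preserves $G_i$: the twist modifies each homotopy class by conjugates of $[\alpha]^{\pm 1}$, which remain in the normal subgroup. The task thus reduces to exhibiting simple closed curves $\alpha_1,\dots,\alpha_n$ on $S$ with all free homotopy classes lying in $\bigcap_i G_i$, splitting into two essential curve systems $\mathcal C$ and $\mathcal D$ with $\mathcal C$ hitting $\mathcal D$ efficiently and $\mathcal C\cup\mathcal D$ filling $S$. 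Any Penner word using every generator then yields a pseudo-Anosov $\phi$ whose induced automorphism preserves every $L_i, R_i$.

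The main obstacle is producing such a filling pair inside $\bigcap_i G_i$. The $G_i$ form a strictly descending chain of finite-index normal subgroups with $[\pi_1(S):G_i]\to\infty$, so a priori $\bigcap_i G_i$ could be small. However, each quotient $\pi_1(S)/G_i$ is a $2$-group (every step factors through a $\mathbb Z/2$ quotient), so $\bigcap_i G_i$ coincides with the kernel of $\pi_1(S)$ mapping to the inverse limit of these $2$-groups, a substantial infinite-index normal subgroup. To extract the filling pair, I would work geometrically in the tower of covers $\widetilde S_n\to S$ corresponding to $G_n$, constructed by iterated pullbacks along $p$ and $q$: a filling essential pair assembled equivariantly on a sufficiently deep level projects to simple closed curves on $S$ whose homotopy classes land in every $G_m$. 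The technical heart is arranging that these projections remain simple and continue to form a filling pair on $S$.

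Once a pseudo-Anosov $\phi$ satisfying invariance has been built, the genericity condition can be imposed by varying the Penner word: different words in the same Dehn twist generators produce different pseudo-Anosovs, all preserving the invariance subgroups, whose stable and unstable laminations range over a set dense enough that a generic choice avoids the nowhere-dense locus in $\mathcal{PML}(S)$ fixed by any nontrivial element of $GD_p\cup GD_q$. This secures the genericity condition and completes the construction.
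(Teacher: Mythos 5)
Your overall strategy is the same as the paper's (Penner twists along curves whose classes lie in $\bigcap_i(L_i\cap R_i)$, plus the virtual-centralizer condition so that Corollary~\ref{hnn} gives hyperbolicity), and your induction checking that the subgroups attached to the reglued graph coincide with $L_i,R_i$ is a sensible addition. But the two places you yourself flag as the ``technical heart'' are exactly where the proposal stops being a proof. First, the existence of a filling pair of essential curve systems inside $\bigcap_i G_i$ is never actually established: assembling curves ``equivariantly'' in the tower of covers corresponding to the $G_n$ and projecting down leaves open precisely the points that matter, since the projection of a simple closed curve from a finite cover is in general not simple, and there is no argument that the projected systems hit each other efficiently and fill $S$. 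The paper solves this concretely: it exhibits explicit curves $\alpha,\beta\subset S$ with $p(\alpha)\simeq q(\alpha)$ and $p(\beta)\simeq q(\beta)$, hence $[\alpha],[\beta]\in L_i\cap R_i$ for all $i$; it takes a separating curve $\alpha'$ lying in the normal closure of $[\alpha]$ (the easy direction of Theorem 13), which lies in $\bigcap_i(L_i\cap R_i)$ because these subgroups are normal; and it sets $\widehat\alpha=\psi(\alpha')$ for a suitable pseudo-Anosov $\psi$ of $S-\alpha$ so that $\widehat\alpha\cup\beta$ fills $S-\alpha$, and similarly for $\widehat\beta$. Without some such construction your Penner input $\mathcal C\cup\mathcal D$ does not exist yet.

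Second, your ``genericity'' step is an assertion rather than an argument. Varying the Penner word in a \emph{fixed} set of four twist curves does not obviously produce stable/unstable laminations that are dense in $\mathcal{PML}(S)$, nor even a family known to escape the fixed locus of $D_p$ and $D_q$: all these laminations are carried by one train track determined by $\mathcal C\cup\mathcal D$, and ``the bad locus is nowhere dense, so a generic word works'' would require a Baire-type statement about this countable family that you have not supplied. The paper avoids genericity entirely with a conjugation trick: if $D_p(\Lambda^s_{\phi_0})=\Lambda^s_{\phi_0}$, replace $\Phi_0$ by $T_\alpha\Phi_0T_\alpha^{-1}$, which still preserves every $L_i$ and $R_i$; using $D_pT_\alpha D_p^{-1}=T_{D_p(\alpha)}=T_\gamma$ with $\gamma$ disjoint from $\alpha$, one shows $D_p$ cannot also fix $T_\alpha(\Lambda^s_{\phi_0})$, for otherwise $T_\alpha^{-1}T_\gamma\in VC\langle\Phi_0\rangle$, forcing some power of $T_\alpha^{-1}T_\gamma$ into $\langle\Phi_0\rangle$, which is impossible since that element is neither pseudo-Anosov nor trivial; a variant with $T_\alpha^2$ then handles $D_q$. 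You need either this explicit argument or an honest density/avoidance proof to close the hyperbolicity half of the statement.
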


\begin{figure}[h]
\begin{center}
\input{torus3_m.pstex_t}
\end{center}
\caption{}
\end{figure}

\begin{figure}[h]
\begin{center}
\input{torus2_m.pstex_t}
\end{center}
\caption{}
\end{figure}

\begin{proof}
 If there exists a pseudo-Anosov
homeomorphism $\phi$, such that $\phi_{\ast}(L_i)=L_i$ and
$\phi_{\ast}(R_i)=R_i$, according to Lemma 11, then $[stab(e):\cap
stab_{\epsilon \in E}(\epsilon)]=\infty$. Therefore
$\pi_1(\mathcal G_{\phi^m})$ is not commensurate to a
surface-by-free group.

Curves mentioned in this theorem are shown in Figure 6 and Figure
7, these two figures are a refinement of Figure 4.

First, we will describe the covering maps $p$ and $q$ with more
details. Let $p^{-1}(a^2)=c$, $q^{-1}(a)=c\cup d$. It is easy to
see that $p(\alpha)$ is homotopic to $q(\alpha)\subset F$ and
$p(\beta)$ is homotopic to $q(\beta)\subset F$, where $\alpha,\
\beta\subset S$ as in Figure 6. Therefore $[\alpha],\ [\beta]\in
L_i\cap R_i$ for all $i$.

Second, we claim that if $\gamma$ is a simple closed curve in $S$,
such that $[\gamma]\subset L_i$ for some $i$, then
$(\tau_{\gamma})_{\ast}$ of the Dehn-twist $\tau_{\gamma}$ fixes
$L_i$. Note that $L_i$ is a finite index normal subgroup of
$\pi_1(S)$ if and only if there exists a finite group $K$ and a
homomorphism $f:\pi_1(S)\rightarrow K$ such that $L_i=ker(f)$. We
shall see that $\tau_{\gamma}$ maps every element in the kernel of
$f$ to an element in the kernel of $f$, i.e., $(\tau)_{\ast}$
fixes $L_i$. Let $[g]\in \pi_1(S)$ be an element in $L_i$.
Decompose $[g]=[h_1]\cdots[h_n]$, so that $[h_j]\in \pi_1(S)$ is
represented by a closed loop in $S$ which has only one transverse
intersection point with $\gamma$, for all $j\in \{1,\cdots,n\}$.
Depending on how $h_i$ intersects with $\gamma$,
$[\tau_{\gamma}(h_i)]$ is one of the following four kinds:
$[h_i\gamma]$, $[\gamma h_i]$, $[\gamma h_i\gamma^{-1}]$,
$[\gamma^{-1} h_i\gamma]$. The trivial case is $g\cap
\gamma=\emptyset$, so $f([\tau_{\gamma}(g)])=[g]$. Otherwise,
\begin{align*}
f([\tau_{\gamma}(g)])&=f([\tau_{\gamma}(h_1)]\cdots [\tau_{\gamma}(h_n)])\\
&=f([\tau_{\gamma}(h_1)])\cdots f(\tau_{\gamma}(h_n))\\
&=f([h_1])\cdots f([h_n])=f([h_1\cdots h_n])=f([g])=I_k\\
\intertext{where $I_k$ is the identity of $K$. It shows that $(\tau_{\gamma})_{\ast}$ fixes $L_i$.}
\end{align*}

If we can find disjointly essential
 curve systems $\mathcal C$ and $\mathcal D$ which satisfy the
conditions in Theorem 3, and if all the homotopy classes of the
elements of $\mathcal C$ and $\mathcal D$ belong to $L_i$ and
$R_i$ for all $i$, then we can construct a pseudo-Anosov
homeomorphism $\phi$ as described in Theorem 3, such that
$\phi_{\ast}$ fixes $L_i$ and $R_i$ for all $i$.

In the following, we will prove that there exist disjointly essential
 curve systems $\mathcal C=
\alpha \cup \widehat{\alpha}$, and $\mathcal D=\beta \cup
\widehat{\beta}$, such that $\mathcal C\ \cup \mathcal D$ fills
$S$, where $\alpha$, $\beta$ as in Figure 6. In addition,
$[\alpha]$, $[\widehat \alpha]$, $[\beta]$ and $[\widehat
\beta]\in \cap_i(L_i\cap R_i)$.

In order to find a simple closed curve $\widehat \alpha$
satisfying the above conditions, first, we will show that there
exists a simple closed curve $\alpha^{\prime}$ such that
$[\alpha^{\prime}]\in \cap_i(L_i\cap R_i)$. Since $L_i$ and $R_i$
are finite index normal subgroups of $\pi_1(S)$ , and $[\alpha]\in
\cap_i(L_i\cap R_i)$, the normal closure $N_{\alpha}$ of
$[\alpha]$ is a subgroup of $\cap_i(L_i\cap R_i)$. Recall that the
normal closure $N_{\alpha}$ of $[\alpha]$ is the smallest normal
subgroup of $\pi_1(S)$ contains $[\alpha]$. Applying Theorem 13,
we only need the easy direction of this theorem, the separating
curve $\alpha^{\prime}$ as in Figure 6 represents an element in
$N_{\alpha}$.

Second, we shall construct a simple closed curve $\widehat\alpha$
on $S$ from the simple closed curve $\alpha^{\prime}$.

From \cite{Mosher:traintracks}, we know there exists a short exact
sequence:
$$1\rightarrow \langle T_{\alpha}\rangle\rightarrow stab(\alpha)\rightarrow MCG(S-\alpha)\rightarrow 1$$
where $\langle T_{\alpha}\rangle$ is a cyclic subgroup of $MCG(S)$
generated by the mapping class $T_{\alpha}$ of the Dehn-twist
$\tau_{\alpha}$ around $\alpha$, $stab(\alpha)$ is a subgroup of
$MCG(S)$ which fixes $\alpha$, $S-\alpha$ is a surface by cutting
$S$ along $\alpha$. The homomorphism $\iota:
stab(\alpha)\rightarrow MCG(S-\alpha)$ is defined as
$\Phi\rightarrow\Phi|_{S-\alpha}$, for $\Phi\in stab(\alpha)$.

Choose a pseudo-Anosov homeomorphism $\psi\in Homoeo(S-\alpha)$,
maybe need pass to a high enough power of $\psi$, such that
$\widehat{\alpha}=\psi(\alpha^{\prime})$ is very close to the
stable geodesic lamination $\Lambda^S_{\psi}$ of $\psi$, therefore
$\widehat{\alpha}\cup \beta$ fills $S-\alpha$. Also
$\widehat{\alpha}$ is disjoint with $\alpha$ because
$\alpha^{\prime}$ is disjoint with $\alpha$.

Using the same method, we can find a simple closed curve
$\widehat{\beta}$ which is disjoint with $\beta$ and
$\widehat{\beta}\cup \alpha$ fills $S-\beta$.

Let $\mathcal C=\{\alpha, \widehat{\alpha}\}$, $\mathcal
D=\{\beta, \widehat{\beta}\}$, it is easy to see that $\mathcal
C\cup \mathcal D$ fills $S$. According to Theorem 3, if $\phi_0$
is a homeomorphism of $S$, such that $\tau_{\alpha}^{+}$,
$\tau_{\widehat{\alpha}}^{+}$, $\tau_{\beta}^{-}$ and
$\tau_{\widehat{\beta}}^{-}$ appear at least once in $\phi_0$,
then $\phi_0$ is a pseudo-Anosov homeomorphism. Since
$[\alpha]$,$[\\widehat{\alpha}]$, $[\beta]$, $[\widehat{\beta}]\in
\cap_i(L_i\cap R_i)$,  $(\phi_0)_{\ast}$ fixes $L_i$ and $R_i$ for
all i.

In order to finish the proof of this theorem, according to
Corollary \ref{hnn}, we only need to show that there exists some
pseudo-Anosov homeomorphism $\phi$ constructed as above, so that
the virtual centralizer $VC\langle \Phi \rangle$ of $\langle \Phi
\rangle$ has trivial intersection with the mapping classes of the
deck transformation groups of the covering maps $p$ and $q$
respectively, where $\Phi\in MCG(S)$ is the mapping class of
$\phi$. Abusing of notation, denote both the deck transformations
and the mapping classes of the deck transformations by $D_p$ and
$D_q$. The deck transformation group of $p$ has only two elements
$D_p$ and the identity.

Let $\phi_0$ be a pseudo-Anosov homeomorphism of $S$ constructed
above, and let $\Phi_0$ be its mapping class. Let
$\Lambda_{\phi_0}^s$ and $\Lambda_{\phi_0}^u$ be the stable and
unstable geodesic laminations of $\phi_0$ respectively. It is
known that $\Phi_0$ fixes $L_i$ and $R_i$ for all $i$.

Suppose the deck transformation group of $p$ has nontrivial
intersection with the virtual centralizer of
$\langle\Phi_0\rangle$, i.e.,
$D_p(\Lambda_{\phi_0}^s)=\Lambda_{\phi_0}^s$. We claim that
$D_p(T_{\alpha}(\Lambda_{\phi_0}^s))\neq
T_{\alpha}(\Lambda_{\phi_0}^s)$, where $T_{\alpha}$ is the mapping
class of the Dehn-twist $\tau_{\alpha}$. Notice that
$T_{\alpha}(\Lambda_{\phi_0}^s)$ is the stable geodesic lamination
of the pseudo-Anosov mapping class $T_{\alpha}\Phi_0
T_{\alpha}^{-1}$, and $T_{\alpha}\Phi_0 T_{\alpha}^{-1}$ fixes
$L_i$ and $R_i$ for all $i$. If the claim is true, let
$\Phi_1=T_{\alpha}\Phi_0 T_{\alpha}^{-1}$, then
$VC\langle\Phi_1\rangle$ has trivial intersection with $D_p$.

 We shall prove the claim. Notice that there exists a simple closed curve
$\gamma$ on $S$ is disjoint with $\alpha$, such that
$D_p(\alpha)=\gamma$. According to Lemma 4.1.C in
\cite{Ivanov:mcg},
$D_pT_{\alpha}D_p^{-1}=T_{D_p(\alpha)}=T_{\gamma}$.

Suppose
$D_pT_{\alpha}(\Lambda_{\phi_0}^s)=T_{\alpha}(\Lambda_{\phi_0}^s)$,
then:
\begin{align*}
D_pT_{\alpha}(\Lambda_{\phi_0}^s)&=D_pT_{\alpha}\Phi_0T_{\alpha}^{-1}(T_{\alpha}(\Lambda_{\phi_0}^s))\\
&=T_{\gamma}D_p\Phi_0T_{\alpha}^{-1}(T_{\alpha}(\Lambda_{\phi_0}^s))\\
&=T_{\gamma}D_p\Phi_0(\Lambda_{\phi_0}^s)=T_{\gamma}(\Lambda_{\phi_0}^s)\\
\intertext{Therefore
$T_{\alpha}(\Lambda_{\phi_0}^s)=T_{\gamma}(\Lambda_{\phi_0}^s)$.}
\end{align*}

It follows that $T_{\alpha}^{-1}T_{\gamma}\in
VC\langle\Phi_0\rangle$, but from Theorem 3.5 in
\cite{Mosher:Geosurvmcg}, we know that $VC\langle\Phi_0\rangle$
has $\langle\Phi_0\rangle$ as a finite index subgroup. Hence up to
some power $m$, $(T_{\alpha}^{-1}T_{\gamma})^m\in
\langle\Phi_0\rangle$, but obviously
$(T_{\alpha}^{-1}T_{\gamma})^m$ is neither pseudo-Anosov nor the
identity, so it is not an element of $\langle\Phi_0\rangle$.
Therefore $D_pT_{\alpha}(\Lambda_{\phi_0}^s)\neq
T_{\alpha}(\Lambda_{\phi_0}^s)$.

If in addition  $D_qT_{\alpha}(\Lambda_{\phi_0}^u)\neq
T_{\alpha}(\Lambda_{\phi_0}^u)$, then take $\Phi=\Phi_1$, this theorem is proved.

If
$D_qT_{\alpha}(\Lambda^u_{\phi_0})=T_{\alpha}(\Lambda_{\phi_0}^u)$,
then we claim $D_qT_{\alpha}^2(\Lambda_{\phi_0}^u)\neq
T^2_{\alpha}(\Lambda^u_{\phi_0})$. If the claim is not true, then
\begin{align*}
D_qT^2_{\alpha}(\Lambda^u_{\phi_0})&=T^2_{\alpha}(\Lambda^u_{\phi_0})\\
&=T_{\alpha}(D_qT_{\alpha}(\Lambda^u_{\phi_0}))\\
&=D_qT_{\theta}(T_{\alpha}(\Lambda^u_{\phi_0})),\\
\end{align*}
where $\theta=D_q(\alpha)$ is a simple closed curve on $S$
disjoint from $\alpha$. Therefore
$$T_{\alpha}^{-1}T^{-1}_{\theta}D_q^{-1}D_qT_{\alpha}^2(\Lambda^u_{\phi_0})=\Lambda^u_{\phi_0}$$
It follows that
$T_{\alpha}^{-1}T_{\theta}^{-1}T_{\alpha}^2(\Lambda^u_{\phi_0})=\Lambda^u_{\phi_0}$.
Since $\theta$, $\alpha$ are disjoint simple closed curves,
$T_{\alpha}T_{\theta}^{-1}=T_{\theta}^{-1}T_{\alpha}$. Hence
$T^{-1}_{\alpha}T_{\theta}^{-1}T_{\alpha}^2(\Lambda^u_{\phi_0})=T_{\theta}^{-1}T_{\alpha}(\Lambda^u_{\phi_0})=\Lambda^u_{\phi_0}$.
By the same reason in the above argument, it is impossible.

Replacing $T_{\alpha}$, $T_{\gamma}$ by $T_{\alpha}^2$,
$T_{\gamma}^2$ in the above proof of
$D_pT_{\alpha}(\Lambda^s_{\phi_0})\neq
T_{\alpha}(\Lambda^s_{\phi_0})$, we can see
$D_pT_{\alpha}^2(\Lambda_{\phi_0}^s)\neq
T_{\alpha}^2(\Lambda_{\phi_0}^s)$. Take
$\Phi=T_{\alpha}^2\Phi_0T_{\alpha}^{-2}$, then this theorem is
proved.
\end{proof}

\newpage
\addcontentsline{toc}{section}{\protect\numberline{}{References}}

\newcommand{\etalchar}[1]{$^{#1}$}
\providecommand{\bysame}{\leavevmode\hbox to3em{\hrulefill}\thinspace}
\providecommand{\MR}{\relax\ifhmode\unskip\space\fi MR }
\providecommand{\MRhref}[2]{%
  \href{http://www.ams.org/mathscinet-getitem?mr=#1}{#2}
}
\providecommand{\href}[2]{#2}

\end{document}